\newcommand{\oset}[3][0ex]{%
	\mathrel{\mathop{#3}\limits^{
			\vbox to#1{\kern-2\ex@
				\hbox{$\scriptscriptstyle#2$}\vss}}}}
\newcommand\Ve{\ensuremath{\varepsilon}}
\newcommand{\Pmc}{\leftharpoonup}
\newcommand\Ipmc{{{\,}{}_\imath\!\!\leftharpoonup{ }}}
\newcommand{\Hom}{\mathop{\text{Hom}}}
\newcommand{\Pmci}{{\,}{\oset{\hspace*{3pt}\lambda\hspace*{-1.5pt}{\text{\textquotesingle}}}{\Pmc}}{\,}}
\newcommand{\Pma}{\rightarrow}
\newcommand{\Ipma}{\ensuremath{\Pma_\imath}}
\newcommand{\Pmai}{{\,}{\oset{\lambda\hspace*{-1.5pt}\text{\textquotesingle}\hspace*{1pt}}{\Pma}}{\,}}
\newcommand{\Pcci}{\lambda'_{\Pmc}}
\let\Mc\blacktriangleleft
\newcommand{\Ma}{\triangleright}
\newcommand{\U}{{}^{-1}{}}
\newcommand{\Up}{{}^{-\!\overline{\hspace*{1pt}1}}{}}
\newcommand{\Z}{{}^{0}{}}
\newcommand{\Zp}{{}^{\overline{0}}{}}
\newcommand{\Ti}{{\theta^{-1}}}
\let\longto\longrightarrow
\newcommand{\Not}[2][0]{
	\setcounter{enumi}{#1}
	\renewcommand{\theenumi}{#2\arabic{enumi}}
	\renewcommand{\labelenumi}{(\theenumi)}
	\setlength{\itemindent}{\widthof{#2}}
	\setlength{\itemsep}{4pt}
}
\newcommand{\NNot}[2][0]{
	\setcounter{enumii}{#1}
	\renewcommand{\theenumi}{}
	\renewcommand{\theenumii}{#2\arabic{enumii}}
	\renewcommand{\labelenumii}{(\theenumii)}
	\setlength{\itemindent}{\widthof{#2}}
	\setlength{\itemsep}{4pt}
}
\newlength{\IgD}
\newcommand{\REf}[1]{ 
	&\settowidth{\IgD}{~\scriptsize (\ref{#1})~}\hspace*{.5\IgD}&\mathclap{\stackrel{(\ref{#1})}{=}}\settowidth{\IgD}{~\scriptsize (\ref{#1})~}\hspace*{.5\IgD}&&
}
\newcommand{\EREf}{ 
	&\phantom{=}&\mathclap{=}\phantom{=}&&
}
\begin{document}\frenchspacing

\title{Globalizations for partial (co)actions on coalgebras}
\author{Felipe Castro and Glauber Quadros%
	\thanks{The authors were partially supported by CNPq, Brazil. The authors would like to thank Antonio Paques, Alveri Sant'Ana and the referee, whose comments, corrections and suggestions were very useful to improve the manuscript. We would like to thanks Lourdes Haase for her corrections and suggestions about the paper writing.\newline%
	\indent\textbf{2010 Mathematics Subject Classification:} primary 16T15; secondary 16T99, 16W22\newline%
	\indent\textbf{Key words and phrases:} Hopf algebras, partial action, partial coaction, globalization, partial module coalgebra, partial comodule coalgebra.%
	}%
}




\date{May 8, 2019}

\theoremstyle{plain}
	\newtheorem{teo}{Theorem}[section]
	\newtheorem{prop}[teo]{Proposition}
	\newtheorem{lema}[teo]{Lemma}
	\newtheorem{coro}[teo]{Corollary}
	\newtheorem{dfn}[teo]{Definition}
\theoremstyle{definition}
	\newtheorem{obs}[teo]{Remark}
	\newtheorem{ex}[teo]{Example}

\maketitle


\begin{abstract}
	In this paper, we introduce the notion of globalization for partial module coalgebra and for partial comodule coalgebra. We show that every partial module coalgebra is globalizable exhibiting a standard globalization. We also show the existence of globalization for a partial comodule coalgebra, provided a certain rationality condition. Moreover, we show a relationship between the globalization for the (co)module coalgebra and the usual globalization for the (co)module algebra.
\end{abstract}



\section{Introduction}

Partial actions of groups were first considered in the context of operators algebra (cf.~\cite{E}). Dokuchaev and Exel in \cite{DE} introduced partial actions of groups in a purely algebraic context, obtained several classical results in the setting of partial actions of groups and covering the actions of groups. The actions of Hopf algebras on algebras also generalize the theory of actions of groups (cf.~\cite[Example~4.1.6]{M}). The notion of action of groups was extended in two directions, in both contexts extensive theories were developed, where we highlight the Morita and Galois theories. As a natural task, Caenepeel and Janssen in \cite{CJ} introduced the concept of partial actions of Hopf algebras on algebras, called partial module algebras. This new theory arose unifying all the above theories.

The globalization process, which consist in constructing an action of a group such that a given partial action can be seen inside it, was first described by Abadie (cf. \cite{A}) and it plays an important role in the context of partial actions of groups. Alves and Batista successfully translated it to the new context of partial module algebras (cf. \cite{AB1,AB2}).

Furthermore, the structure of Hopf algebra allows us to define new objects in the theory, named the comodule algebra and the partial comodule algebra, which are the duals of module algebra and partial module algebra, respectively (cf. \cite{CJ}). Alves and Batista have shown the relation between these structures through a globalization (cf.~\cite{AB3}).

Since coalgebra is the dual notion of algebra, thus one can think about these partial actions and coactions on a coalgebra, generating new objects and studying its properties. Batista and Vercruysse defined these structures, called partial module coalgebra and partial comodule coalgebra, which are the dual notions of partial module algebra and partial comodule algebra respectively. All these four partial structures are closely related. These dual structures were studied, and it was shown some interesting properties between them (cf.~\cite{BV}).

The aim of this work is to study the existence of globalization in the setting of partial module coalgebras and partial comodule coalgebras, relating our results with the results obtained by Alves and Batista in \cite{AB2}. This work is organized as follows.

In the second section we recall some preliminary results about partial actions of Hopf algebras, which are necessary for a full understanding of this work. An expert on this area can pass through this section, and go directly to the next section.

The third section is devoted to the study of partial actions on coalgebras. We study the correspondence between this new structure and the partial module algebra, some examples and useful properties. The purpose of this section is to discuss the existence of a globalization for partial action on coalgebras. For this, we start by introducing the notion of induced partial action on coalgebras and showing how to obtain it via a comultiplicative projection satisfying a special condition (see Proposition \ref{ipmc}). After that, we define a globalization for partial module coalgebras (see Definition \ref{gmc}) and we also discuss its affinity with the well known notion of globalization for partial module algebra, obtaining a direct relation between them (see Theorem \ref{gpmc<->gpma}). Finally, we construct a globalization for any given partial module coalgebra, called \emph{the standard globalization} (see Theorem \ref{teo-gmc}).

In the fourth section, we study partial coactions on coalgebras. We present some examples and important properties related to this structure. We also show a correspondence among 
these four partial objects, asking for special conditions, like density or finite dimension, getting a one to one correspondence among 
all of them. In a similar way as made in Section \ref{c2}, we define the induced partial coaction assuming  the existence of a comultiplicative projection satisfying a special condition (see Proposition \ref{ipcc}). After that, we define globalization for partial comodule coalgebra (see Definition \ref{gcc}) and relate it with the notion of partial module coalgebra earlier defined in Section \ref{c2} (see Theorem \ref{gcc+h0sp=>gmc}). Supposing a rational module hypothesis, we show that every partial comodule coalgebra is globalizable, constructing the \emph{standard} globalization for it (see Theorem \ref{teo-gcc}).

Throughout this paper $\Bbbk$ denotes a field, all objects are $\Bbbk$-vector spaces (i.e. algebra, coalgebra, Hopf algebra, etc., mean respectively $\Bbbk$-algebra, $\Bbbk$-coalgebra, $\Bbbk$-Hopf algebra, etc.), linear maps mean $\Bbbk$-linear maps and unadorned tensor product means $\otimes_\Bbbk$. We use the well known Sweedler's Notation for comodules and coalgebras, in this way: given a coalgebra $D$, with coproduct $\Delta$, we will denote, for any $c\in C$
\[
	\Delta (c) = c_1 \otimes c_2,
\]
where the summation is understood; and given a left $C$-comodule $M$ via $\lambda$, we will denote, for any $m\in M$
\[
	\lambda (m) = m^{-1} \otimes m^0,
\]
where the summation is also understood.

Moreover, we will call $\pi$ a \emph{projection} if it is a linear map such that $\pi \circ \pi = \pi$.

In the context of (partial) actions on algebras, it is usual to assume that all modules are left modules. In order to respect the correspondences among 
the four partial structures (see Sections 3 and 4), we assume the following convention: (partial) module algebras are left (partial) module algebras; (partial) comodule algebras are right (partial) comodule algebras; (partial) module coalgebras are right (partial) module coalgebras; and (partial) comodule coalgebras are left (partial) comodule coalgebras.

\section{Partial Actions}

\begin{dfn}[Module algebra]\label{def-module-algebra}
	A \emph{left $H$-module algebra} is a pair $(A, \triangleright)$, where $A$ is an algebra and $\triangleright\colon H \otimes A \to A$ is a linear map, such that the following conditions hold, for any $h,k\in H$ and $a,b\in A$:
	\begin{enumerate}\Not{MA}
		\item $1_H \triangleright a = a$;\label{ma-1}
		\item $h \triangleright (a b) = (h_1 \triangleright a)(h_2 \triangleright b)$;\label{ma-2}
		\item $h \triangleright (k \triangleright a) = (hk) \triangleright a$;\label{ma-3}
	\end{enumerate}
\end{dfn}

\begin{obs}
	The standard definition of module algebra (cf. \cite{DNR}) contains the additional condition, for any $h\in H$:
	\[
		h \triangleright 1_A = \Ve(h) 1_A.
	\]
	This condition should be required for modules algebra over bialgebras. Since $H$ is a Hopf algebra, so it is a consequence of the others.
\end{obs}

\begin{dfn}[Partial module algebra \cite{CJ}]\label{pma}
	A \emph{left partial $H$-module algebra} is a pair $(A, \Pma)$, where $A$ is an algebra and $\Pma\colon H \otimes A \to A$ is a linear map, such that the following conditions are satisfied, for all $h,k\in H$ and $a,b\in A$:
	\begin{enumerate}\Not{PMA}
		\item $1_H \Pma a = a$;\label{pma-1}
		\item $h \Pma (ab) = (h_1 \Pma a)(h_2 \Pma b)$;\label{pma-2}
		\item $h \Pma (k \Pma a) = (h_1 \Pma 1_A)(h_2 k \Pma a)$.\label{pma-3}
	\end{enumerate}
	The partial action $\Pma$ is said \emph{symmetric} if the following additional condition is satisfied:
	\begin{enumerate}\Not[3]{PMA}
		\item $h \Pma (k \Pma a) = (h_1 k \Pma a)(h_2 \Pma 1_A)$.\label{pma-4}
	\end{enumerate}
\end{dfn}

The conditions $(\ref{pma-2})$ and $(\ref{pma-3})$ can be replaced by
\[
	h \Pma (a(g\Pma b)) = (h_1\Pma a)(h_2g\Pma b),
\]
for all $h, g \in H$ and $a,b \in A$. It can be useful if working with non-unital algebras.

A natural way to get a partial module algebra is inducing from a global action, as follows:
\def\ABIIPropI{\cite[Proposition~1]{AB2}}
\begin{prop}[\ABIIPropI]
	Given a (global) module algebra $B$ and a right ideal $A$ of $B$ with unity $1_A$. Then $A$ is a partial $H$-module algebra via
	\[
		h\cdot a = 1_A (h \triangleright a)
	\]
\end{prop}

In this context, an \emph{enveloping action} (or globalization) of a partial module algebra $A$ is a pair $(B, \theta)$, where $B$ is a module algebra, $\theta\colon A \to B$ is an algebra monomorphism, such that
\begin{enumerate}[\indent\bfseries (i)]
	\item $\theta(A)$ is a right ideal of $B$;
	\item the partial action on $A$ is equivalent to the induced partial action on $\theta(A)$;
	\item $B = H \triangleright\theta(A)$.
\end{enumerate}

Alves and Batista showed that any partial action has an enveloping action.
\def\ABIIThmI{\cite[Theorem~1]{AB2}}
\begin{teo}[\ABIIThmI]
	Let $A$ be a left partial $H$-module algebra, $\varphi\colon A \to \text{Hom}(H, A)$ the map given by $\varphi(a)(h) = h \cdot a$ and $B = H \triangleright \varphi(A)$. Then $(B, \varphi)$ is an enveloping action of $A$.
\end{teo}

\begin{obs}
	In the construction made above, we can highlight some points that are important for this paper.
	\begin{enumerate}[\indent\bfseries 1)]
		\item The image of $A$ should be a right ideal of an enveloping action $B$, but it does not need to be a left ideal, neither the partial action needs to be symmetric (cf. \cite[Proposition~4]{AB2}).
		\item The construction of an enveloping action supposes that the partial module algebra is unital. However, this restriction may be overcome by appropriate projections, as shown below.
		
		Let $B$ be an $H$-module algebra (not necessarily unital) and $A$ a subalgebra of $B$. Given a multiplicative projection
		\[
			\pi\colon B \longto A
		\]
		such that, for all $h,k \in H$ and $x,y \in A$, the condition
		\begin{align}
			\label{condindmap}
			\pi(h \Ma (x(k \Ma y))) = \pi(h \Ma (x\pi(k \Ma y)))
		\end{align}
		holds, so we can define a structure of partial module algebra  in $A$ by
		\begin{align*}
			h \cdot x = \pi(h \triangleright x).
		\end{align*}
		
		Note that the converse is also true. In fact, supposing that the projection $\pi$ induces a structure of partial module algebra in $A$, then Equation \eqref{condindmap} holds.
		
		Now, since we have the notion of induced partial action, we can define a globalization (or enveloping action) of $A$.
		
		\pagebreak
		\begin{dfn}[Globalization for partial module algebra]\label{gma}
			Given a right partial $H$-module algebra $A$ with partial action $\Pma$, a \emph{globalization} of $A$ is a triple  $(B, \theta, \pi)$, where $B$ is a right $H$-module algebra via $\Ma$, $\theta \colon A \to B$ is an algebra monomorphism and $\pi$ is a multiplicative projection from $B$ onto $\theta(A)$, satisfying the following conditions:
			\begin{enumerate}\NNot{GMA}
				\item the partial action on $A$ is equivalent to the partial action induced by $\Ma$ on $\theta(A)$, that is, $\theta(h \Pma a) = h \Pma \theta(a) = \pi(h \Ma \theta(a))$\label{gma-2}; 
				\item $B$ is the $H$-module algebra generated by $\theta(A)$, that is, $B = H \Ma \theta(A)$,
			\end{enumerate}
			for all $h \in H, a\in A$ and $b \in B$.
		\end{dfn} 
		
		It is a simple task to check that any partial module algebra has a globalization, in this sense.
		
		Since the induced partial action defined by Alves and Batista is a particular case of the construction above, where the projection is given by left multiplication by the idempotent $1_A$, then this construction of globalization generalizes the construction made in \cite{AB2}.
		
		\item Since the notion of induction by a central idempotent is a particular case of induction by projection, then it inspires us to define the induced partial (co)module coalgebra using projections (see Definitions~\ref{ipmc} and \ref{ipcc}).
	\end{enumerate}
\end{obs}

\section{Partial actions on coalgebras}\label{c2}

\subsection{Partial module coalgebras}\label{partial-module-coalgebra}

\begin{dfn}[Module coalgebra]\label{mc}
	A \emph{right $H$-module coalgebra} is a pair $(D,\Mc)$, where $D$ is a coalgebra and $\Mc\colon D \otimes H \to D$ is a linear map such that for any $g,h \in H$ and $d \in D$, the following properties hold:
	\begin{enumerate}\Not{MC}
		\item $d \Mc 1_H = d$;\label{mc-1}
		\item $\Delta( d \Mc  h) = d_1 \Mc h_1 \otimes d_2 \Mc h_2$;\label{mc-2}
		\item $(d \Mc h) \Mc g = d \Mc h g$.\label{mc-3}
	\end{enumerate}
	
	In this case we say that $H$ acts on $D$ via $\Mc$, or that $\Mc$ is an action of $H$ on $D$. We sometimes use the terminology global module algebra to differ the above structure from the partial one.
\end{dfn}

\begin{obs}
	The above definition can be seen in a categorical sense, as follows (cf.~\cite[Definition~11.2.8]{R}): \emph{a $\Bbbk$-vector space $D$ is said a right $H$-module coalgebra if it is a coalgebra object in the category of right $H$-modules}.
	
	Note that from this categorical approach, it is required a fourth property for an $H$-module coalgebra. More precisely, the following equality needs to hold:
	\[
		\Ve_D(d \Mc h) = \Ve_D(d) \Ve_H(h),
	\]
	for all $h\in H$ and $d\in D$.
\end{obs}

Since $H$ is a Hopf algebra, so it has an antipode, then this additional condition is a consequence from the others axioms of Definition \ref{mc}, as stated below.

\begin{prop}\label{acmc}
	Let $H$ be a Hopf algebra and $D$ a right $H$-module coalgebra. Then $\Ve_D(d \Mc h) = \Ve_D(d) \Ve_H(h)$,
	for any $d\in D, h\in H$.\qed
\end{prop}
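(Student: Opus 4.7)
The plan is to derive the identity $\Ve_D(d \Mc h) = \Ve_D(d)\Ve_H(h)$ by a two-stage argument: first I will establish the $D$-valued identity
\[
\Ve_H(h)\, d \;=\; \Ve_D(d_1 \Mc h)\, d_2,
\]
which can be viewed as a twisted counit formula for the action, and then I will apply $\Ve_D$ and use the counit of $D$ in the form $\Ve_D(d_2)\, d_1 = d$ to conclude. The second stage is routine, so the real content lives in the first.

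To obtain the $D$-valued identity, I would start by applying $\Ve_D \otimes \mathrm{id}_D$ to axiom (\ref{mc-2}), which gives the useful intermediate formula
\[
d \Mc h \;=\; \Ve_D(d_1 \Mc h_1)\,(d_2 \Mc h_2). \quad (\ast)
\]
Then I would compute $(d \Mc h_1) \Mc S(h_2)$ in two different ways. On one hand, using (\ref{mc-3}) together with the antipode identity $h_1 S(h_2) = \Ve_H(h)\, 1_H$ and (\ref{mc-1}), this element equals $\Ve_H(h)\, d$. On the other hand, expanding $d \Mc h_1$ by $(\ast)$ and applying (\ref{mc-3}) again (working with $\Delta^2 h = h_1 \otimes h_2 \otimes h_3$ via coassociativity) produces
\[
(d \Mc h_1) \Mc S(h_2) \;=\; \Ve_D(d_1 \Mc h_1)\,(d_2 \Mc h_2\, S(h_3)).
\]
Here $h_2 \otimes h_3$ arises from the splitting of the second tensorand of $\Delta h$, so $h_1 \otimes h_2 S(h_3) = h \otimes 1_H$ by the antipode and counit axioms of $H$; combined with (\ref{mc-1}), this collapses the right-hand side to $\Ve_D(d_1 \Mc h)\, d_2$, and equating the two expressions yields the desired twisted identity.

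With the $D$-valued identity in hand, applying $\Ve_D$ and using linearity to move the scalar $\Ve_D(d_2)$ inside so as to recombine $d = \Ve_D(d_2)\, d_1$ gives $\Ve_H(h)\Ve_D(d) = \Ve_D(d \Mc h)$, as desired. The main obstacle is keeping the Sweedler bookkeeping consistent in the middle step: two antipode cancellations occur at different depths of the iterated coproduct, and one has to choose the coassociative splitting carefully so that both occurrences—$h_1 S(h_2)$ in the first computation and $h_2 S(h_3)$ in the second—collapse cleanly to $\Ve_H$ times a unit. A naive attempt, such as applying $\Ve_D$ directly to $(d\Mc h_1)\Mc S(h_2)=\Ve_H(h)d$, only leads in circles, which is why the detour through the $D$-valued identity is essential.
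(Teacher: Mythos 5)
Your argument is correct and is essentially the paper's own proof reorganized: the paper runs the identical computation (insert $h_1S(h_2)$, split with (\ref{mc-3}), expand with the counit of $D$ and (\ref{mc-2}), collapse $h_2S(h_3)$) as a single scalar-valued chain inside $\Ve_D$, whereas you first extract the $D$-valued identity $\Ve_H(h)\,d = \Ve_D(d_1\Mc h)\,d_2$ and only then apply $\Ve_D$. The two antipode cancellations you flag, at $h_1S(h_2)$ and $h_2S(h_3)$, occur at exactly the same depths in the paper's chain, so there is nothing further to reconcile.
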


\begin{obs}
	From Definition \ref{mc} and Proposition \ref{acmc} it follows that one can define, without loss of generality, module coalgebra for non-counital coalgebras, and both definitions coincide when the coalgebra is counital.
\end{obs}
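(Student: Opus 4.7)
The remark asserts two things that need separate justification: first, that Definition \autoref{mc} can be transported verbatim to coalgebras without a counit; second, that in the presence of a counit the resulting notion coincides with the categorical formulation, which demands the additional compatibility $\Ve_D(d\Mc h)=\Ve_D(d)\Ve_H(h)$.

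For the first point, my plan is simply to inspect the axioms (MC1)--(MC3): each refers only to the comultiplication $\Delta_D$ and the action $\Mc$, never to the counit $\Ve_D$. Consequently, if $(D,\Delta_D)$ is a non-counital coalgebra (that is, a vector space equipped only with a coassociative comultiplication), the same three conditions still make sense and can be adopted as the definition of a right $H$-module coalgebra in the non-counital setting. No new verification is required beyond pointing out that $\Ve_D$ does not appear in the statement of the axioms.

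For the second point, I would note that the only difference between the categorical formulation and Definition \autoref{mc} in the counital case is precisely the extra axiom $\Ve_D(d\Mc h)=\Ve_D(d)\Ve_H(h)$. Thus to show the two definitions coincide it suffices to verify that this axiom is already implied by (MC1)--(MC3) whenever $D$ is counital, which is exactly what \autoref{acmc} establishes. The reverse implication is obvious (if the categorical axioms hold, then in particular (MC1)--(MC3) hold), so the two formulations describe the same class of objects when $D$ admits a counit.

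I do not foresee any technical obstacle here, since the substantive content is already packaged in \autoref{acmc}, and the remark is essentially a bookkeeping observation that no axiom of Definition \autoref{mc} actually references $\Ve_D$. The only subtlety worth flagging is that the hypothesis that $H$ is a Hopf algebra (rather than merely a bialgebra) is essential: the proof of \autoref{acmc} invokes the antipode, and in the bialgebra setting the counit compatibility would have to be imposed as an independent axiom.
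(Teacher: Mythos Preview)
Your explanation is correct and matches the paper's intent: the remark is stated without proof in the paper, and your unpacking of it---that axioms (MC1)--(MC3) never mention $\Ve_D$, and that \autoref{acmc} supplies the missing counit compatibility in the counital case---is precisely the content the authors have in mind.
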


Now we present some classical examples of module coalgebras (cf. \cite{DNR,M,R}.

\begin{ex}\label{ex-mc-hopf}
	Any Hopf algebra is a right module coalgebra over itself via right multiplication.
\end{ex}

\begin{ex}\label{ex-mc-tensor}
	Let $D$ be a right $H$-module coalgebra and $C$ a coalgebra, then $C \otimes D$ is a right $H$-module coalgebra with action given by
	\[
		(c \otimes d) \Mc h = c \otimes (d \Mc h)
	\]
	for any $c \otimes d \in C \otimes D$ and $h\in H$.
\end{ex}

\begin{dfn}[Partial module coalgebra]\label{pmc}\cite[Definition~5.1.]{BV}
	A \emph{right partial $H$-module coalgebra} is a pair $(C, \Pmc)$, where $C$ is a coalgebra $C$ and $\Pmc\colon C \otimes H \to C$ is a linear map, such that the following conditions are satisfied, for any $g,h \in H$ and $c \in C$:
	\begin{enumerate}\Not{PMC}
		\item $c \Pmc 1_H = c $;\label{pmc-1}
		\item $\Delta( c \Pmc  h) = c_1 \Pmc h_1 \otimes c_2 \Pmc h_2$;\label{pmc-2}
		\item $(c \Pmc h) \Pmc g = \Ve(c_1 \Pmc h_1) (c_2 \Pmc h_2 g)$.\label{pmc-3}
	\end{enumerate}
	A partial module coalgebra is said \emph{symmetric} if the following additional condition is satisfied:
	\begin{enumerate}\Not[3]{PMC}
		\item $(c \Pmc h) \Pmc g = (c_1 \Pmc h_1 g) \Ve(c_2 \Pmc h_2)$.\label{pmc-4}
	\end{enumerate}
\end{dfn}

One can define left partial module coalgebra in an analogous way.

The definition of right partial module coalgebra can be extended to non-counital coalgebras, as follows.

\begin{dfn}\label{mcpse}
	A \emph{right partial $H$-module coalgebra} is a pair $(C,\Pmc)$, where $C$ is a (non necessarily counital) coalgebra and $\Pmc\colon C \otimes H \to C$ is a linear map, such that, for any $h,k \in H$ and $c \in C$, the following conditions hold:
	\begin{enumerate}\Not{PMC\textquotesingle}
		\item $c \Pmc 1_H = c$; \label{mcpse-1}
		\item $(c \Pmc h)_1 \otimes ((c \Pmc h)_2 \Pmc k) = (c_1 \Pmc h_1) \otimes (c_2 \Pmc h_2 k)$. \label{mcpse-2}
	\end{enumerate}	
	Moreover, it is \emph{symmetric} if the following additional condition holds:
	\begin{enumerate}\Not[2]{PMC'}
		\item $((c \Pmc h)_1 \Pmc k) \otimes (c \Pmc h)_2 = (c_1 \Pmc h_1 k) \otimes (c_2 \Pmc h_2)$. \label{mcpse-3}
	\end{enumerate}
	
\end{dfn}

It is straightforward to check the following statement.

\begin{prop}
	If $C$ is a counital coalgebra, then Definition \ref{mcpse} is equivalent to Definition \ref{pmc}.\qed
\end{prop}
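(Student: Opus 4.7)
The plan is to show both definitions coincide under the counitality hypothesis. Condition $(\ref{pmc-1})$ is literally $(\ref{mcpse-1})$, so it suffices to compare the remaining axioms in each direction.

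First I would handle the easier direction, $\autoref{mcpse} \Rightarrow \autoref{pmc}$. Setting $k = 1_H$ in $(\ref{mcpse-2})$ and using $(\ref{mcpse-1})$ on each tensor factor collapses the right-multiplication by $1_H$, yielding $\Delta(c \Pmc h) = c_1 \Pmc h_1 \otimes c_2 \Pmc h_2$, i.e.\ $(\ref{pmc-2})$. Applying $\Ve \otimes \mathrm{id}$ to $(\ref{mcpse-2})$ instead gives $(c \Pmc h) \Pmc k = \Ve(c_1 \Pmc h_1)(c_2 \Pmc h_2 k)$, where the left-hand side is obtained from the counit property of $C$ together with the linearity of $\Pmc$; this is $(\ref{pmc-3})$.

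For the converse direction, $\autoref{pmc} \Rightarrow \autoref{mcpse}$, start with $(c \Pmc h)_1 \otimes ((c \Pmc h)_2 \Pmc k)$ and apply $(\ref{pmc-2})$ to the first tensor factor to obtain $(c_1 \Pmc h_1) \otimes ((c_2 \Pmc h_2) \Pmc k)$. An application of $(\ref{pmc-3})$ to the second factor introduces an extra level of coproduct; after normalizing indices by coassociativity into a triple Sweedler, the tensor becomes
\[
(c_1 \Pmc h_1) \otimes \Ve(c_2 \Pmc h_2)(c_3 \Pmc h_3 k).
\]
The key step is then to move the scalar $\Ve(c_2 \Pmc h_2)$ across the tensor, reparse the triple Sweedler via the other parenthesization $(\Delta \otimes \mathrm{id}) \Delta = (\mathrm{id} \otimes \Delta) \Delta$, and apply $(\ref{pmc-2})$ together with the counit axiom of $C$ to recognize $\Ve(c_2 \Pmc h_2)(c_1 \Pmc h_1)$ as $c_1 \Pmc h_1$ in single Sweedler form. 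The resulting tensor is $(c_1 \Pmc h_1) \otimes (c_2 \Pmc h_2 k)$, which is precisely $(\ref{mcpse-2})$.

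The equivalence $(\ref{pmc-4}) \Leftrightarrow (\ref{mcpse-3})$ is handled by an entirely analogous argument with the roles of the two tensor factors swapped. The main technical obstacle throughout is tracking the Sweedler indices when passing between the single and triple Sweedler pictures; counitality of $C$ is exactly what makes the collapse in both directions possible, which is also why the two definitions need not coincide in the non-counital setting.
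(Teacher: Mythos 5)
Your proposal is correct and follows essentially the same route as the paper: the forward direction via $k=1_H$ and $\Ve\otimes I$ applied to (\ref{mcpse-2}), and the converse via (\ref{pmc-2}), then (\ref{pmc-3}), then moving the scalar $\Ve(c_2\Pmc h_2)$ across the tensor and collapsing $c_1\Pmc h_1\,\Ve(c_2\Pmc h_2)$ to $c\Pmc h$ in single Sweedler form using (\ref{pmc-2}) and counitality. The symmetric case is dispatched analogously in both your argument and the paper's.
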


Batista and Vercruysse \cite{BV} related right partial module coalgebras and left partial module algebra, using non-degenerated dual pairing between an algebra $A$ and a coalgebra $C$. Here we consider a special case, where the algebra is the dual of the coalgebra.

\begin{prop}[{\cite[Theorem~5.12]{BV}}] \label{pma->pmc}
	Let $C$ be a coalgebra, and suppose that $\Pma\colon H\otimes C^\ast \to C^\ast$ and $\Pmc\colon C\otimes H \to C$ are linear maps satisfying the following compatibility:
	\begin{equation}
		(h \Pma \alpha)(c) = \alpha(c \Pmc h)\label{comp-pma<->pmc}
	\end{equation}
	for any $h\in H$ and $c\in C$.
	Then $C$ is a partial $H$-module coalgebra via $\Pmc$ if and only if $C^\ast$ is a partial $H$-module algebra via $\Pma$.
\end{prop}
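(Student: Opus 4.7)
The plan is to verify the biconditional axiom-by-axiom by systematically applying the compatibility~\eqref{comp-pma<->pmc}. For each defining axiom of a partial $H$-module algebra structure on $C^\ast$, I would evaluate both sides at an arbitrary $c \in C$, use \eqref{comp-pma<->pmc} to convert every $\Pma$-operation on a functional into an evaluation of that functional on $C$ via $\Pmc$, and expand any convolution product using Sweedler notation. This will recast each of the axioms (\ref{pma-1})--(\ref{pma-3}) as a family of scalar identities parametrised by elements of $C^\ast$ (or of $C^\ast \otimes C^\ast$).

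Concretely, I expect (\ref{pma-1}) to reduce to $\alpha(c \Pmc 1_H) = \alpha(c)$; (\ref{pma-2}) to reduce to
\[
\alpha\bigl((c \Pmc h)_1\bigr)\,\beta\bigl((c \Pmc h)_2\bigr) = \alpha(c_1 \Pmc h_1)\,\beta(c_2 \Pmc h_2);
\]
and (\ref{pma-3}) to reduce to
\[
\alpha\bigl((c \Pmc h) \Pmc k\bigr) = \Ve(c_1 \Pmc h_1)\,\alpha(c_2 \Pmc h_2 k),
\]
in each case for all relevant $c,h,k,\alpha,\beta$. The forward direction (partial module coalgebra $\Rightarrow$ partial module algebra) is then immediate: starting from (\ref{pmc-1})--(\ref{pmc-3}), one pairs with $\alpha$ or with $\alpha \otimes \beta$ to recover these scalar identities, and since $c \in C$ is arbitrary one obtains the corresponding equality in $C^\ast$. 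The converse direction requires lifting the scalar identities back to $C$ (respectively $C \otimes C$); for (\ref{pmc-1}) and (\ref{pmc-3}) this is simply injectivity of the canonical map $C \hookrightarrow C^{\ast\ast}$.

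The most delicate step will be (\ref{pmc-2}), where I must pass from a bilinear identity valued in $\Bbbk$ to an equality in $C \otimes C$. This amounts to the injectivity of the natural map $C \otimes C \to (C^\ast \otimes C^\ast)^\ast$, which is a standard linear-algebra fact: fix a basis of one tensor factor, rewrite the alleged relation in this basis, and separate in the other factor by choosing appropriate functionals. This is the only point where the argument genuinely relies on $C^\ast$ being the \emph{full} linear dual of $C$, and it clarifies why this proposition is the clean special case of the more general non-degenerate dual-pairing framework considered by Batista and Vercruysse.
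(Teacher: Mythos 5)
Your proposal is correct and takes essentially the same route as the paper: each axiom is verified by evaluating functionals at elements of $C$ and translating back and forth through the compatibility \eqref{comp-pma<->pmc}. The only difference is that you make explicit the separation-of-points arguments needed for the converse direction --- in particular the injectivity of the canonical map $C \otimes C \to (C^\ast \otimes C^\ast)^\ast$ used to recover (\ref{pmc-2}) --- which the paper leaves implicit behind the phrase ``the converse follows by a similar computation.''
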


\begin{obs}\label{Obs-Cqq}
	The existence of a linear map
	\[
		\begin{alignedat}1
			\Pmc\colon C\otimes H & \longto C
		\end{alignedat}
	\]
	implies the existence of a linear map
	\[
		\begin{alignedat}1
			\Pma\colon H \otimes C^\ast & \longto C^\ast \\
			h \otimes \alpha & \longmapsto (h \Pma \alpha)\colon c\longmapsto \alpha(c \Pmc h),
		\end{alignedat}
	\]
	for all $h\in H, \alpha\in C^\ast$ and $c\in C$. Clearly these two maps satisfy the Equation \eqref{comp-pma<->pmc}.
\end{obs}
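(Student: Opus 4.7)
The statement is essentially tautological, so the proof plan is to unpack definitions and check that the candidate formula really does yield a well-defined linear map $H\otimes C^\ast\to C^\ast$; the compatibility \eqref{comp-pma<->pmc} is then built in by construction. The main (only) point of substance is the well-definedness; there is no genuine obstacle.

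First I would fix $h\in H$ and $\alpha\in C^\ast$ and define a map $\varphi_{h,\alpha}\colon C\to \Bbbk$ by $\varphi_{h,\alpha}(c)=\alpha(c\Pmc h)$. Since $\Pmc\colon C\otimes H\to C$ is linear, the map $c\mapsto c\Pmc h$ is linear, and composing with $\alpha\in C^\ast$ gives a linear functional on $C$. Hence $\varphi_{h,\alpha}\in C^\ast$. This is exactly the element called $h\Pma \alpha$.

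Next I would verify that the assignment $(h,\alpha)\mapsto h\Pma\alpha$ is bilinear, so that it factors through $H\otimes C^\ast$. Linearity in $\alpha$ is immediate from the definition of $\varphi_{h,\alpha}$. Linearity in $h$ follows from the linearity of $\Pmc$ in its second argument: for $h,h'\in H$ and $\lambda\in\Bbbk$, one has $c\Pmc(h+\lambda h')=c\Pmc h+\lambda(c\Pmc h')$, hence $\alpha(c\Pmc(h+\lambda h'))=(h\Pma\alpha)(c)+\lambda(h'\Pma\alpha)(c)$ for every $c\in C$. By the universal property of the tensor product, we obtain a well-defined linear map $\Pma\colon H\otimes C^\ast\to C^\ast$.

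Finally, the compatibility \eqref{comp-pma<->pmc} holds tautologically: by construction $(h\Pma\alpha)(c)=\varphi_{h,\alpha}(c)=\alpha(c\Pmc h)$ for all $h\in H$, $\alpha\in C^\ast$ and $c\in C$. There is nothing further to check, and in particular no hypothesis on $\Pmc$ (such as being a partial action) is needed; this is why \autoref{pma->pmc} can be applied symmetrically once one knows the data on one side.
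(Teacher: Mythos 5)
Your proposal is correct and matches the paper's treatment: the paper states this remark without proof, regarding the construction as immediate, and your argument simply spells out the routine verifications (that $c\mapsto\alpha(c\Pmc h)$ lies in $C^\ast$, that the assignment is bilinear and hence factors through $H\otimes C^\ast$, and that the compatibility holds by construction). Nothing is missing and no different route is taken.
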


\begin{obs}\label{Obs-reflex}
	It is not clear to the authors if the converse of Remark \ref{Obs-Cqq} is true in general. However, it holds if $C \simeq C^{\ast\ast}$ via
	\[
		\begin{alignedat}1
			\wedge\colon C & \longto C^{\ast\ast}\\
			c & \longmapsto \widehat{c}\colon\alpha\longmapsto\alpha(c).
		\end{alignedat}
	\]
	
	In fact, given $h\in H$ and $c\in C$ we consider $\xi_{h,c}\in C^{\ast\ast}$ given by
	\[
	\xi_{h,c}(\alpha) = (h\Pma \alpha)(c).
	\]
	
	Hence, we can define the linear map
	\[
		\begin{alignedat}1
			\Pmc\colon C \otimes H & \longto C\\
			c \otimes h & \longmapsto c \Pmc h = \wedge^{-1}(\xi_{h,c}),
		\end{alignedat}
	\]
	that clearly satisfies the Equation \eqref{comp-pma<->pmc}.
\end{obs}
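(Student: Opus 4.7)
The plan is to exploit the reflexivity $C \simeq C^{\ast\ast}$ to transport the $\Pma$-structure back to $C$ through the isomorphism $\wedge$, essentially constructing $\Pmc$ as the ``pre-transpose'' of $\Pma$. The central idea is that once $\wedge$ is bijective, every element of $C^{\ast\ast}$ comes uniquely from $C$, so any natural element of $C^{\ast\ast}$ built out of the partial action $\Pma$ can be pulled back to $C$.

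Concretely, the first step is, for each fixed $h \in H$ and $c \in C$, to manufacture an element of $C^{\ast\ast}$ out of $\Pma$. The natural choice is the functional $\xi_{h,c}\colon C^{\ast}\to\Bbbk$ defined by $\xi_{h,c}(\alpha) = (h\Pma\alpha)(c)$; it is linear in $\alpha$ because $\Pma(h\otimes -)$ is linear and evaluation at $c$ is linear, so indeed $\xi_{h,c}\in C^{\ast\ast}$. One then defines
\[
c \Pmc h := \wedge^{-1}(\xi_{h,c}).
\]
The compatibility \autoref{comp-pma<->pmc} is then automatic from the definition of $\wedge$: we have $\alpha(c\Pmc h) = \widehat{c\Pmc h}(\alpha) = \xi_{h,c}(\alpha) = (h\Pma\alpha)(c)$.

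It remains to check that $(c,h) \mapsto c \Pmc h$ is bilinear, so that it descends to a linear map on the tensor product $C \otimes H$. Bilinearity of $\xi_{h,c}$ in $(h,c)$ is immediate from bilinearity of $\Pma$ in $(h,\alpha)$ together with linearity of evaluation at $c$, and then linearity of $\wedge^{-1}$ transports this to $\Pmc$. I do not anticipate any serious obstacle: once reflexivity is assumed, the argument is essentially formal, as $\Pma$ already encodes all the data of $\Pmc$ via the evaluation pairing. The delicate point, implicit in the preceding remark, is that without the hypothesis $C\simeq C^{\ast\ast}$ the same construction produces only a linear map $C\otimes H \to C^{\ast\ast}$; it lands back in $C$ precisely when $\wedge$ is surjective, which is why the reflexivity assumption cannot easily be dropped.
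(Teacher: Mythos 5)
Your construction is exactly the paper's: form $\xi_{h,c}\in C^{\ast\ast}$ by $\xi_{h,c}(\alpha)=(h\Pma\alpha)(c)$, pull it back through $\wedge^{-1}$ to define $c\Pmc h$, and observe that the compatibility $\alpha(c\Pmc h)=(h\Pma\alpha)(c)$ is then immediate from the definition of $\wedge$. Your added checks of linearity in $\alpha$ and bilinearity in $(c,h)$ are routine details the paper leaves implicit, so the two arguments coincide.
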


The next result follows from Remarks \ref{Obs-Cqq} and \ref{Obs-reflex}.
\begin{prop}\label{pmc->pma}
	Let $C$ be a coalgebra. Then the following statements hold:
	\begin{enumerate}[(i)]
		\item If $C$ is a partial $H$-module coalgebra, then $C^{\ast}$ is a partial $H$-module algebra.
		\item If $C^\ast$ is a partial $H$-module algebra and $C \simeq C^{\ast\ast}$ via $\wedge$, then $C$ is a partial $H$-module coalgebra.\qed
	\end{enumerate}
\end{prop}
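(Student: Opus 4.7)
The plan is to deduce both implications directly from Proposition \ref{pma->pmc} by using the two preceding remarks to manufacture the linear map that is missing on each side, so that the compatibility \eqref{comp-pma<->pmc} is automatic.

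For part (i), starting from a partial $H$-module coalgebra $C$ with structural map $\Pmc$, I would invoke Remark \ref{Obs-Cqq} to define
\[
\Pma\colon H\otimes C^{\ast}\longto C^{\ast},\qquad (h\Pma\alpha)(c):=\alpha(c\Pmc h).
\]
By construction this $\Pma$ satisfies \eqref{comp-pma<->pmc}, so the ``only if'' direction of Proposition \ref{pma->pmc} applies verbatim and yields that $C^{\ast}$ is a left partial $H$-module algebra via $\Pma$. No additional verification is needed.

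For part (ii), I would use the hypothesis $C\simeq C^{\ast\ast}$ via $\wedge$ to run the reverse construction from Remark \ref{Obs-reflex}: given $\Pma$ on $C^{\ast}$, for each $h\in H$ and $c\in C$ the functional $\xi_{h,c}\in C^{\ast\ast}$ defined by $\xi_{h,c}(\alpha)=(h\Pma\alpha)(c)$ pulls back to an element $c\Pmc h:=\wedge^{-1}(\xi_{h,c})\in C$. This produces a linear map $\Pmc\colon C\otimes H\to C$ which, by construction, satisfies \eqref{comp-pma<->pmc}, and then the ``if'' direction of Proposition \ref{pma->pmc} gives that $C$ is a right partial $H$-module coalgebra.

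There is no genuine obstacle here; the only subtle point is structural rather than computational, namely the asymmetry between (i) and (ii). In one direction the dual map $\Pma$ is always available from $\Pmc$ (Remark \ref{Obs-Cqq}), whereas in the opposite direction one cannot in general recover an element of $C$ from a functional in $C^{\ast\ast}$, which is precisely why part (ii) needs the reflexivity assumption $C\simeq C^{\ast\ast}$ via $\wedge$. Once this isomorphism is granted, both parts reduce to citing Proposition \ref{pma->pmc} together with the corresponding remark.
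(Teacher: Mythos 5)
Your proposal is correct and is essentially the paper's own argument: the paper derives this proposition directly from Remarks \ref{Obs-Cqq} and \ref{Obs-reflex} combined with Proposition \ref{pma->pmc}, exactly as you do. Your closing observation about the asymmetry between the two parts (why reflexivity is needed only in (ii)) matches the point made in Remark \ref{Obs-reflex}.
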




\subsubsection{Examples and the induced partial action}

\begin{ex}\label{expmc-1}
	Any global right $H$-module coalgebra is a partial one.
\end{ex}

\begin{ex}\label{expmc-2}
	Let $\alpha$ be a linear functional on $H$, then the ground field $\Bbbk$ is a right partial $H$-module coalgebra via $x \Pmc h = x\alpha(h)$ if and only if the following conditions hold, for any $x \in \Bbbk$ and $h, k \in H$:
	\begin{enumerate}[(i)]
		\item $\alpha(1_H) = 1_\Bbbk$;
		\item $\alpha(h)\alpha(k) = \alpha(h_1)\alpha(h_2 k)$.
	\end{enumerate}
\end{ex}

\begin{ex}
	Let $G$ be a group and $H = \Bbbk G$ the group algebra. Consider $\alpha \in \Bbbk G^\ast$ and let $N = \{g\in G\mid\alpha(g) \not= 0 \}$. Then $\Bbbk$ is a partial $\Bbbk G$-module coalgebra if and only if $N$ is a subgroup of $G$. In this case, we have that
	\[
		\alpha(g) =
		\begin{cases}
			1,&\text{if $g$ lies in $N$}\\
			0,&\text{otherwise.}
		\end{cases}
	\]
\end{ex}

\begin{ex}[{\cite[Theorem~5.7]{BV}}]
	A group $G$ acts partially on a coalgebra $C$ if and only if $C$ is a symmetric partial left $\Bbbk G$-module coalgebra. In this case, $g \cdot c = \theta_g(P_{g^{-1}}(c))$ and $P_g(c) = \Ve(g^{-1} \cdot c_1) c_2 = c_1 \Ve(g^{-1} \cdot c_2)$.
\end{ex}

Now, we construct a partial action on a coalgebra from a global one.

Let $D$ be a right $H$-module coalgebra and $C \subseteq D$ a subcoalgebra. Since $D$ is a right $H$-module, we could try to induce a partial action restricting the action of $D$ on $C$. But one can note that the range of this restriction does not need to be contained in $C$, thus we need to project it on $C$. By this way, let $\pi\colon D \to C$ be a projection from $D$ over $C$ (as vector spaces) and consider the following map
\[
	\Ipmc\colon C \otimes H \stackrel{\Mc}{\longrightarrow} D \stackrel{\pi}{\longrightarrow} C,
\]
where $\Mc$ denotes the right action of $H$ on $D$.

In the sequel, we exhibit a necessary and sufficient condition to the above map to be a partial action on $C$. In the next proposition, we denote by $C \Mc H$ the vector space spanned by the elements $c \Mc h$, for $c\in C$ and $h\in H$.

\begin{prop}[Induced partial module coalgebra]\label{ipmc}
	Let $D$ be a right $H$-module coalgebra, $C \subseteq D$ a subcoalgebra and $\pi \colon D \to C$ a comultiplicative projection satisfying
	\begin{equation}
		\pi [ \pi (x) \Mc h] = \pi [ \Ve(\pi (x_1)) x_2 \Mc h], \label{eq:pmc}
	\end{equation}
	for any $x \in C \Mc H$.
	
	Consider $\Ipmc\colon C\otimes H \to C$ the linear map given by
	\begin{equation}
		c \Ipmc h \coloneqq \pi(c \Mc h),\label{eq:ipmc}
	\end{equation}
	then $C$ becomes a right partial $H$-module coalgebra via $\Ipmc$.
\end{prop}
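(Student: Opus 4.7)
The plan is to verify directly the three axioms of \autoref{pmc} for the map $\Ipmc$ defined by \autoref{eq:ipmc}, exploiting in turn each of the three hypotheses on $\pi$: that it is a projection onto $C$, that it is comultiplicative, and that it satisfies the technical identity \autoref{eq:pmc}.

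First I would check (\ref{pmc-1}). For $c\in C$, writing $c\Ipmc 1_H=\pi(c\Mc 1_H)$ and using the module coalgebra axiom (\ref{mc-1}) gives $\pi(c)$, which equals $c$ because $c$ lies in the image of the projection $\pi$. Next, for (\ref{pmc-2}), given $c\in C$ and $h\in H$, I would compute $\Delta(\pi(c\Mc h))$ by pushing $\Delta$ through $\pi$ using the assumed comultiplicativity, then apply (\ref{mc-2}) to split $\Delta(c\Mc h)$ into $c_1\Mc h_1\otimes c_2\Mc h_2$, and finally recognize each factor as $c_i\Ipmc h_i$.

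The real content is in (\ref{pmc-3}), where the hypothesis \autoref{eq:pmc} is exactly what is needed. Starting from the right-hand side $\Ve(c_1\Ipmc h_1)(c_2\Ipmc h_2k)$, I would unfold the definition of $\Ipmc$, use (\ref{mc-3}) to rewrite $c_2\Mc h_2k=(c_2\Mc h_2)\Mc k$, then use (\ref{mc-2}) to regroup $c_1\Mc h_1\otimes c_2\Mc h_2$ as $(c\Mc h)_1\otimes(c\Mc h)_2$. Pulling the scalar $\Ve[\pi((c\Mc h)_1)]$ inside $\pi$ (allowed by $\Bbbk$-linearity), the expression becomes $\pi[\Ve(\pi(d_1))\,d_2\Mc k]$ with $d:=c\Mc h$, which by \autoref{eq:pmc} equals $\pi[\pi(d)\Mc k]=\pi[\pi(c\Mc h)\Mc k]=(c\Ipmc h)\Ipmc k$.

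I expect the only real obstacle to be bookkeeping in (\ref{pmc-3}): the assumed identity \autoref{eq:pmc} is tailored precisely to match the step where the counit is absorbed into $\pi$, so the verification is essentially forced once one stages the computation in that order. The remaining two axioms are routine manipulations with the projection and comultiplicativity of $\pi$.
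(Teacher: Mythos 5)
Your proposal is correct and follows essentially the same route as the paper: verify (\ref{pmc-1}) from the projection property, (\ref{pmc-2}) from comultiplicativity of $\pi$ together with (\ref{mc-2}), and (\ref{pmc-3}) by unfolding $\Ipmc$, regrouping via (\ref{mc-3}) and (\ref{mc-2}), absorbing the scalar into $\pi$, and invoking \autoref{eq:pmc} with $d=c\Mc h$. The paper's own verification proceeds through exactly this chain of equalities.
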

\begin{proof}\
	
	(\ref{pmc-1}): Let $c\in C$, so $c \Ipmc 1_H = \pi(c \Mc 1_H) = \pi(c) = c$, where the last equality holds because $\pi$ is a projection.
	
	(\ref{pmc-2}): If $\pi$ is a comultiplicative map (i.e., $\Delta \circ \pi = (\pi \otimes \pi) \circ \Delta$), then for any $c\in C$ and $h\in H$, we have
	\begin{alignat*}4
		\Delta(c \Ipmc h) & \EREf \Delta(\pi(c \Mc h))\\
			& \EREf (\pi \otimes \pi)(\Delta(c \Mc h))\\
			& \REf{mc-2}  (\pi \otimes \pi)(c_1 \Mc h_1 \otimes c_2 \Mc h_2)\\
			& \EREf \pi(c_1 \Mc h_1) \otimes \pi(c_2 \Mc h_2)\\
			& \EREf c_1 \Ipmc h_1 \otimes c_2 \Ipmc h_2.
	\end{alignat*}
	
	(\ref{pmc-3}): For $h,k \in H$ and $c\in C$, we have
	\begin{alignat*}4
		\Ve(c_1 \Ipmc h_1)(c_2 \Ipmc h_2 k)	& \EREf \Ve[\pi(c_1 \Mc h_1)]\pi(c_2 \Mc h_2 k)\\
			& \REf{mc-3}  \Ve[\pi(c_1 \Mc h_1)]\pi[(c_2 \Mc h_2) \Mc k] \\
			& \REf{mc-2}  \Ve[\pi((c \Mc h)_1)]\pi[((c \Mc h)_2) \Mc k] \\
			& \EREf \pi[\Ve[\pi((c \Mc h)_1)](c \Mc h)_2 \Mc k] \\
			& \REf{eq:pmc} \pi[\pi(c \Mc h) \Mc k] \\
			& \EREf (c \Ipmc h) \Ipmc k.
	\end{alignat*}
	
	Hence, $C$ is a partial module coalgebra.
\end{proof}

\begin{obs}
	One can note that the converse of Proposition \ref{ipmc} is also true. Indeed, supposing $C$ a subcoalgebra of a module coalgebra $D$ and $\pi \colon D \to C$ a comultiplicative projection such that $C$ is a partial module coalgebra by $c \Ipmc h = \pi(c \Mc h)$, hence $\pi [ \pi (x) \Mc h] = \pi [ \Ve(\pi (x_1)) x_2 \Mc h]$ for any $x \in C \Mc H$.
	
	The proof of the above statement follows straight from the calculations made in Proposition \ref{ipmc}.
\end{obs}

With the construction of induced partial action we have the necessary tools to define a globalization for a partial module coalgebra. This is our next goal.

\subsection{Globalization for partial module coalgebra}

From now on, given a left (resp., right) $H$-module $M$ with the action denoted by $\Ma$ (resp., $\Mc$), we consider the $\Bbbk$-vector space $H \Ma M$ (resp., $M \Mc H$) as the $\Bbbk$-vector space generated by the elements $h \Ma m$ (resp., $m \Mc h$) for all $h \in H$ and $m \in M$. Clearly, it is an $H$-submodule of $M$.

\begin{dfn}[Globalization for partial module coalgebra]\label{gmc}
	Given a right partial $H$-module coalgebra $C$ with partial action $\Pmc$, a \emph{globalization} of $C$ is a triple  $(D, \theta, \pi)$, where $D$ is a right $H$-module coalgebra via $\Mc$, $\theta \colon C \to D$ is a coalgebra monomorphism and $\pi$ is a comultiplicative projection from $D$ onto $\theta(C)$, satisfying the following conditions for all $h \in H, c\in C$ and $d \in D$:
	\begin{enumerate}\Not{GMC}\itemsep6pt
		\item $\pi[\pi(d) \Mc h] = \pi[\Ve(\pi(d_1))d_2 \Mc h]$;\label{gmc-1}
		\item $\theta(c \Pmc h) = \theta (c) \Ipmc h$;\label{gmc-2}
		\item $D$ is the $H$-module generated by $\theta(C)$, i.e., $D = \theta(C) \Mc H$.\label{gmc-3}
	\end{enumerate}
\end{dfn}

\begin{obs}
	The first condition of Definition \ref{gmc} says that we can induce a structure of partial module coalgebra on $\theta(C)$. The second one says that this induced partial action on $\theta(C)$ is equivalent to the partial action on $C$. The last one says that does not exists any submodule coalgebra of $D$ containing $\theta(C)$.
\end{obs}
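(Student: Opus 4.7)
My plan is to verify the three informal assertions in the Remark by tracing each one to the corresponding condition of \autoref{gmc} and invoking \autoref{ipmc} or elementary module-coalgebra arguments. The whole proof is essentially a bookkeeping translation of the axioms, with no deep obstacle; the main care required is notational.

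For the first assertion, I would apply \autoref{ipmc} to the triple $(D,\theta(C),\pi)$. Since $\theta$ is a coalgebra monomorphism, $\theta(C)$ is a subcoalgebra of $D$; by hypothesis $\pi$ is a comultiplicative projection of $D$ onto $\theta(C)$; and condition (\ref{gmc-1}) of \autoref{gmc} is exactly the hypothesis (\ref{eq:pmc}) required in \autoref{ipmc}. Hence \autoref{ipmc} produces a right partial $H$-module coalgebra structure on $\theta(C)$ given by $\theta(c)\Ipmc h=\pi(\theta(c)\Mc h)$.

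For the second assertion, condition (\ref{gmc-2}) reads $\theta(c\Pmc h)=\theta(c)\Ipmc h$, which says precisely that $\theta$ intertwines the two partial actions. Combined with $\theta$ being an injective coalgebra morphism onto its image $\theta(C)$, this promotes $\theta$ to an isomorphism from $(C,\Pmc)$ to $(\theta(C),\Ipmc)$ as right partial $H$-module coalgebras, so the two partial structures are equivalent.

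For the third assertion, suppose $E\subseteq D$ is a right $H$-submodule coalgebra of $D$ containing $\theta(C)$. Since $E$ is stable under $\Mc$, one has $\theta(C)\Mc H\subseteq E\Mc H\subseteq E$. But condition (\ref{gmc-3}) says $\theta(C)\Mc H=D$, so $D\subseteq E$ and therefore $E=D$. Thus every $H$-submodule coalgebra of $D$ containing $\theta(C)$ equals $D$, i.e., $D$ is the minimal such submodule coalgebra. The only subtle point in the whole argument is to notice that the $\Ipmc$ appearing in condition (\ref{gmc-2}) is the induced action on $\theta(C)$ produced by the first step, not an action on $C$ itself.
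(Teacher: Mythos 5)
Your proposal is correct and follows exactly the justification the paper intends (and leaves implicit): the first assertion is an application of \autoref{ipmc} to the subcoalgebra $\theta(C)$ with condition (\ref{gmc-1}) playing the role of \autoref{eq:pmc}, the second is the statement that the bijective coalgebra map $\theta\colon C\to\theta(C)$ is equivariant by (\ref{gmc-2}), and the third follows since any $H$-submodule coalgebra $E\supseteq\theta(C)$ satisfies $D=\theta(C)\Mc H\subseteq E$. Your closing caveat about $\Ipmc$ living on $\theta(C)$ rather than on $C$ is also the right point to flag.
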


\subsubsection{Correspondence between globalizations}

Our next aim is to establish relations between the globalization for partial module coalgebras, as defined in Definition \ref{gmc}, and for partial module algebras (cf.~\cite{AB2}). For this we use the fact that a partial module coalgebra $C$ naturally induces a structure of a partial module algebra on $C^\ast$.

Given a right partial $H$-module coalgebra $C$, it follows from Proposition \ref{pmc->pma} that the dual $C^\ast$ is a left partial $H$-module algebra. The same is true for (global) module coalgebras (cf.~\cite{R}).

\begin{obs}
	Let $C$ be a right partial $H$-module coalgebra, $D$ a right $H$-module coalgebra, $\theta\colon C \to D$ a coalgebra monomorphism and $\pi\colon D \to \theta(C)$ a comultiplicative projection. Since $\theta$ is injective, hence it has an inverse $\theta^{-1}$, defined in $\theta(C) = \pi(D)$.
	
	Consider the linear map $\varphi\colon C^{\ast} \to D^{\ast}$, given by the transpose of $\theta^{-1}\circ\pi$, i.e., for any $\alpha \in C^\ast$, we have $\varphi(\alpha) \coloneqq (\theta^{-1}\circ \pi)^\ast(\alpha) = \alpha \circ\theta^{-1}\circ \pi$. This is clearly a multiplicative monomorphism.
	
	So, one can also define the following linear maps:
	\begin{align*}
		\Ipmc \colon \pi(D) \otimes H & \longto \pi(D)\\
		\pi(d) \otimes h & \longmapsto \pi(\pi(d) \Mc h)\\
		\intertext{and}
		\Ipma \colon H \otimes \varphi(C^\ast) & \longto D^\ast \\
		h \otimes \varphi(\alpha) & \longmapsto \varphi(\varepsilon_C) \ast (h \Ma \varphi(\alpha)).
	\end{align*}
	
	Note that, there is a correspondence between these maps, given by
	\begin{eqnarray}\label{acoesinduzidas}
		(h \Ipma \varphi(\alpha))(\pi(d)) = \varphi(\alpha) (\pi(d) \Ipmc h),
	\end{eqnarray}
	for any $h \in H$, $d \in D$ and $\alpha \in C^\ast$. In fact,
	\begin{alignat*}{4}
		(h \Ipma \varphi(\alpha))(\pi(d))
			& \EREf (\varphi(\Ve)\ast (h\Ma\varphi(\alpha)))(\pi(d))\\
			& \EREf \varphi(\Ve)(\pi(d)_1)\ ((h\Ma\varphi(\alpha)))(\pi(d)_2)\\
			& \EREf \varphi(\Ve)(\pi(d)_1)\ \varphi(\alpha)(\pi(d)_2\Mc h)\\
			& \EREf \varphi(\Ve)(\pi(d_1))\ \varphi(\alpha)(\pi(d_2)\Mc h)\\
			& \EREf \Ve(\theta^{-1}(\pi(\pi(d_1))))\ \alpha(\theta^{-1}(\pi(\pi(d_2)\Mc h)))\\
			& \EREf \Ve_{\theta(C)}(\pi(d_1))\ \alpha(\theta^{-1}(\pi(\pi(\pi(d_2)\Mc h))))\\
			& \EREf \Ve_{\theta(C)}(\pi(d)_1)\ \varphi(\alpha)(\pi(\pi(d)_2\Mc h))\\
			& \EREf \varphi(\alpha)(\pi(\pi(d)\Mc h))\\
			& \EREf \varphi(\alpha)(\pi(d)\Ipmc h),
	\end{alignat*}
	for any $h \in H$, $d \in D$ and $\alpha \in C^\ast$.
	
	Notice that the maps $\Ipmc$ and $\Ipma$ are the induced partial actions on $\pi(D)$ and $\varphi(C^\ast)$, respectively.
	
	In fact, the next statement shows that the map $\Ipmc$ is the induced right partial action in $\pi(D)$ if and only if the map $\Ipma$ is the induced left partial action in $\varphi(C^\ast)$ and, moreover, it relates the globalization of the partial module coalgebra to	the globalization of the dual partial module algebra. Therefore, for simplicity we will write $\Pmc$ instead of $\Ipmc$ and $\Pma$ instead of $\Ipma$ (even for induced partial actions).
\end{obs}

\begin{teo}\label{gpmc<->gpma}
	Let $C$ be a partial module coalgebra. With the above notations, we have that $(\theta(C) \Mc H, \theta, \pi)$ is a globalization for $C$ if and only if $(H \Ma \varphi(C^\ast), \varphi)$ is a globalization for $C^\ast$.
\end{teo}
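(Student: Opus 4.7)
The plan is to verify, axiom by axiom, that the three conditions defining a globalization for a partial module coalgebra correspond, under the dualization $\varphi = (\theta^{-1} \circ \pi)^{\ast}$, to the three conditions defining a globalization for a partial module algebra. The key tools are the pairing $(h \Pma \alpha)(c) = \alpha(c \Pmc h)$ and the formula $\varphi(\alpha)(d) = \alpha(\theta^{-1}(\pi(d)))$. First, observe that both (GMC3), which is $D = \theta(C) \Mc H$, and (GMA3), which is $B = H \Ma \varphi(C^{\ast})$, hold tautologically by the way $D$ and $B$ are presented in the statement.

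The central step is the equivalence (GMC2) $\Leftrightarrow$ (GMA2). For the forward direction, I assume (GMC2) and evaluate both sides of $\varphi(h \Pma \alpha) = \varphi(1_{C^{\ast}}) \ast (h \Ma \varphi(\alpha))$ at a generic $d \in D$. Applying (GMC2) with $c = \theta^{-1}(\pi(d))$, the left-hand side simplifies to $\alpha(\theta^{-1}(\pi(\pi(d) \Mc h)))$. For the right-hand side I use the key identification $\varphi(1_{C^{\ast}}) = \Ve_C \circ \theta^{-1} \circ \pi = \Ve_D \circ \pi$, which reduces it to $\Ve_D(\pi(d_1))\, \alpha(\theta^{-1}(\pi(d_2 \Mc h)))$; the two sides then agree by (GMC1). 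For the converse, I evaluate (GMA2) at $d = \theta(c)$: the facts $\pi(\theta(c)) = \theta(c)$ and $\Delta_D(\theta(c)) = \theta(c_1) \otimes \theta(c_2)$ yield $\alpha(c \Pmc h) = \alpha(\theta^{-1}(\pi(\theta(c) \Mc h)))$ for every $\alpha \in C^{\ast}$, and (GMC2) follows because $C^{\ast}$ separates the points of $C$.

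The remaining axioms are easy consequences of (GMC2) and (GMA2). In the forward direction, (GMA1) --- that $\varphi(C^{\ast})$ is a right ideal of $B$ --- is obtained from the computation $\varphi(\alpha) \ast (h \Ma \varphi(\beta)) = \varphi(\alpha) \ast \varphi(1_{C^{\ast}}) \ast (h \Ma \varphi(\beta)) = \varphi(\alpha) \ast \varphi(h \Pma \beta) = \varphi(\alpha \ast (h \Pma \beta))$, using (GMA2), associativity of convolution, and multiplicativity of $\varphi$. In the converse direction, (GMC1) is derived by writing a generic $d = \theta(c) \Mc k$ via (GMC3) and reducing both sides of (GMC1) to $\Ve(c_1 \Pmc k_1)\, \theta(c_2 \Pmc k_2 h)$ using (MC2) for $D$, (GMC2), and (PMC3) for $C$. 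I expect the main subtlety to be the bookkeeping with $\theta^{-1} \circ \pi$ and the recognition that $\varphi(1_{C^{\ast}})$ is $\Ve_D \circ \pi$ rather than the unit $\Ve_D$ of $D^{\ast}$; once this is handled, the remaining manipulations are routine.
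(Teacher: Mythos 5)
Your proposal is correct and follows the same overall strategy as the paper: transport everything through $\varphi=(\theta^{-1}\circ\pi)^{\ast}$, observe that the third axioms on both sides are tautological for $D=\theta(C)\Mc H$ and $B=H\Ma\varphi(C^\ast)$, identify $\varphi(1_{C^\ast})=\Ve_D\circ\pi$, and match the remaining axioms via the pairing $(h\Pma\alpha)(c)=\alpha(c\Pmc h)$. Your forward direction is essentially the paper's computation (the paper is terser about why the single identity $\varphi(\Ve_C)\ast(h\Ma\varphi(\alpha))=\varphi(h\Pma\alpha)$ yields the right-ideal condition; your explicit chain $\varphi(\alpha)\ast(h\Ma\varphi(\beta))=\varphi(\alpha)\ast\varphi(h\Pma\beta)=\varphi(\alpha\ast(h\Pma\beta))$ is the missing bookkeeping), and your converse derivation of (GMC2) by evaluating (GMA2) at $d=\theta(c)$ and using that $C^\ast$ separates points is exactly the paper's. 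The one genuine divergence is the converse derivation of (GMC1): the paper obtains it directly from (GMA2) by a long computation in $C^\ast$ against an arbitrary $\alpha$, then strips off $\alpha$ and $\theta^{-1}$; you instead derive (GMC2) first and then deduce (GMC1) by writing $d=\theta(c)\Mc k$ via (GMC3) and reducing both sides to $\Ve(c_1\Pmc k_1)\,\theta(c_2\Pmc k_2h)$ using (MC2), (MC3) and (PMC3). This works (both sides of (GMC1) are linear in $d$, so checking on spanning elements suffices), it avoids a second separating-points argument, and it has the added structural payoff of showing that, once $D$ is generated by $\theta(C)$ and $\pi$ is a comultiplicative projection, axiom (GMC1) is actually a consequence of (GMC2), (GMC3) and the partial-module-coalgebra axioms of $C$ --- a redundancy the paper's proof does not make visible.
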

\begin{proof}
	If $(\theta(C) \Mc H, \theta, \pi)$ is a globalization for $C$, it follows that
	\begin{alignat*}4
		(\varphi(\Ve_C) \ast (h \Ma \varphi(\alpha)))(d)
			& \EREf (\varphi(\Ve_C)(d_1))((h \Ma \varphi(\alpha))(d_2)) \\
			& \EREf \Ve_C(\theta^{-1} (\pi (d_1)))  \varphi(\alpha)(d_2 \Mc h) \\
			& \EREf \Ve_{\theta(C)}(\pi (d_1))  \alpha(\theta^{-1} (\pi (d_2 \Mc h))) \\
			& \EREf \alpha(\theta^{-1} (\pi (\Ve_{\theta(C)}(\pi (d_1))d_2 \Mc h))) \\
			& \REf{gmc-1}  \alpha(\theta^{-1} (\pi (\pi(d) \Mc h))) \\
			& \REf{eq:ipmc}  \alpha(\theta^{-1} (\pi(d) \Pmc h)) \\
			& \REf{gmc-2}  \alpha((\theta^{-1} (\pi(d))) \Pmc h) \\
			& \REf{pmc->pma}  ( h \Pma \alpha)(\theta^{-1} (\pi(d))) \\
			& \EREf \varphi(h \Pma \alpha)(d),
	\end{alignat*}
	for every $h\in H, \alpha \in C^{\ast}$ and $d\in D$. Thus,  $\varphi(C^\ast)$ is a right ideal of $H \Ma \varphi(C^\ast)$ and, moreover, $h \Pma \varphi(\alpha) = \varphi(h \Pma \alpha)$. Therefore, $(H \Ma \varphi(C^\ast), \varphi)$ is a globalization for $C^\ast$, as desired.
	
	Conversely, if $(H \Ma \varphi(C^\ast), \varphi)$ is a globalization for $C^\ast$, then
	
	(\ref{gmc-1}): Given $\alpha \in C^{\ast}, h\in H$ and $d\in D$ we have
	\begin{alignat*}4
		\alpha(\theta^{-1} (\pi(\pi(d) \Mc h)))
			& \EREf \alpha(\theta^{-1}( \pi(\pi(d) \Pmc h)))\\
			& \EREf \varphi(\alpha)(\pi(d) \Pmc h)\\
			& \REf{acoesinduzidas} (h \Pma \varphi(\alpha))(\pi(d))\\
			& \REf{gma-2}  \varphi(h \Pma \alpha)(\pi(d))\\
			& \EREf (h \Pma \alpha)(\theta^{-1}(\pi(\pi(d))))\\
			& \EREf (h \Pma \alpha)(\theta^{-1}(\pi(d)))\\
			& \EREf \varphi(h \Pma \alpha)(d) \\
			& \EREf (\varphi(\Ve_C)\ast(h \Ma \varphi(\alpha)))(d) \\
			& \EREf \varphi(\Ve_C)(d_1)(h \Ma \varphi(\alpha))(d_2) \\
			& \EREf \varphi(\Ve_C)(d_1) \varphi(\alpha)(d_2 \Mc h) \\
			& \EREf \varphi(\Ve_C)(d_1) \alpha(\theta^{-1} (\pi(d_2 \Mc h))) \\
			& \EREf \Ve_{C}(\theta^{-1} (\pi(d_1))) \alpha(\theta^{-1} (\pi (d_2 \Mc h))) \\
			& \EREf \Ve(\pi(d_1)) \alpha(\theta^{-1} (\pi (d_2 \Mc h))) \\
			& \EREf \alpha( \theta^{-1} (\pi (\Ve(\pi(d_1))d_2 \Mc h))).
	\end{alignat*}
	Since $\alpha$ is an arbitrary linear functional on $C$ and $\theta \circ \theta^{-1} = I_C$, then
	\[
		\pi(\pi(d) \Mc h) = \pi (\Ve(\pi(d_1))d_2 \Mc h).
	\]
	
	(\ref{gmc-2}): Given $\alpha\in C^\ast$, $h\in H$ and $c\in C$, then
	\begin{alignat*}4
		(\alpha \circ \Ti)(\theta(c) \Pmc h)
			& \EREf (\alpha \circ \Ti) (\pi (\theta(c) \Mc h))\\
			& \EREf (\alpha \circ \Ti \circ \pi) (\theta(c) \Mc h)\\
			& \EREf \varphi(\alpha) (\theta(c) \Mc h)\\
			& \EREf [h \Ma \varphi(\alpha)] (\theta(c))\\
			& \EREf [h \Ma \varphi(\alpha)] (\theta(\Ve_{C}(c_1)~ c_2))\\
			& \EREf \Ve_{C}(c_1)~ [h \Ma \varphi(\alpha)] (\theta(c_2)) \\
			& \EREf (\Ve_{C} \circ \Ti \circ \theta)(c_1)~ [h \Ma \varphi(\alpha)] (\theta(c_2)) \\
			& \EREf (\Ve_{C} \circ \Ti \circ \pi \circ \theta)(c_1)~ [h \Ma \varphi(\alpha)] (\theta(c_2)) \\
			& \EREf \varphi (\Ve_C)(\theta(c_1)) ~ [h \Ma \varphi(\alpha)] (\theta(c_2)) \\
			& \EREf \varphi (\Ve_C)(\theta(c)_1)~ [h \Ma \varphi(\alpha)] (\theta(c)_2) \\
			& \EREf [\varphi (\Ve_C)\ast(h \Ma \varphi(\alpha))]~(\theta(c))\\
			& \EREf (h \Pma \varphi(\alpha))~ (\theta(c))\\
			& \REf{gma-2} \varphi(h \Pma \alpha)~ (\theta(c))\\
			& \EREf (h \Pma \alpha)~ ( \Ti (\pi (\theta(c))))\\
			& \EREf (h \Pma \alpha)~ (\Ti (\theta(c)))\\
			& \EREf (h \Pma \alpha) (c)\\
			& \REf{pmc->pma} \alpha~ (c \Pmc h)\\
			& \EREf (\alpha \circ \Ti \circ \theta) (c \Pmc h)\\
			& \EREf (\alpha\circ \Ti)(\theta (c \Pmc h)).
	\end{alignat*}
	
	Since $\alpha\in C^\ast$ is arbitrary, we obtain that $\theta(c) \Pmc h = \theta (c \Pmc h)$. Therefore, $(\theta(C) \Mc H, \theta, \pi)$ is a globalization for $C$.
\end{proof}

\subsubsection{The standard globalization}

Now our next aim is to show that every partial module coalgebra has a globalization, constructing the standard globalization.

\begin{obs}
	Let $C$ be a right partial $H$-module coalgebra. Consider the coalgebra $C \otimes H$ with the natural structure of the tensor coalgebra, the coalgebra monomorphism from $C$ into $C \otimes H$, given by the natural embedding
	\[
		\begin{alignedat}1
			\varphi\colon C & \longto  C\otimes H\\
			c & \longmapsto  c \otimes 1_H,
		\end{alignedat}
	\]
	and the projection from $C \otimes H$ onto $\varphi(C)$, given by
	\[
		\begin{alignedat}1
			\pi\colon C\otimes H & \longto  \theta(C)\\
			c\otimes h & \longmapsto  (c \Pmc h) \otimes  1_H.
		\end{alignedat}
	\]
	We claim that $\pi$ is comultiplicative. Indeed,  for  $c\in C$ and $h\in H$ we have
	\begin{alignat*}4
		\Delta (\pi(c \otimes h))
			& \EREf \Delta ((c \Pmc h)\otimes 1_H)\\
			& \EREf (c \Pmc h)_1\otimes 1_H \otimes (c \Pmc h)_2\otimes 1_H\\
			& \REf{pmc-2}  c_1 \Pmc h_1\otimes 1_H \otimes c_2 \Pmc h_2\otimes 1_H\\
			& \EREf \pi(c_1 \otimes h_1) \otimes \pi(c_2 \otimes h_2)\\
			& \EREf (\pi \otimes \pi) \Delta(c \otimes h).
	\end{alignat*}
\end{obs}

With the above noticed we are able to construct a globalization for a partial module coalgebra.

\begin{teo}\label{teo-gmc}
	Every right partial $H$-module coalgebra has a globalization.
\end{teo}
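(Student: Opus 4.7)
The plan is to verify that the triple $(C \otimes H, \varphi, \pi)$ built above satisfies the three axioms of \autoref{gmc}. The ambient object $D = C \otimes H$ is a right $H$-module coalgebra via the action on the right tensor factor $(c \otimes k) \Mc h = c \otimes kh$, and it is well known that this makes $C \otimes H$ a right $H$-module coalgebra (it is a special case of \autoref{ex-mc-tensor} with the regular module coalgebra of \autoref{ex-mc-hopf}). The excerpt has already checked that $\varphi \colon c \mapsto c \otimes 1_H$ is a coalgebra monomorphism and that $\pi \colon c \otimes h \mapsto (c \Pmc h) \otimes 1_H$ is a well-defined comultiplicative projection onto $\varphi(C)$. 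So what remains is only \eqref{gmc-1}, \eqref{gmc-2} and \eqref{gmc-3}.

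For \eqref{gmc-2}, the verification is a one-line computation: given $c \in C$ and $h \in H$,
\[
	\varphi(c) \Ipmc h = \pi(\varphi(c) \Mc h) = \pi(c \otimes h) = (c \Pmc h) \otimes 1_H = \varphi(c \Pmc h).
\]
For \eqref{gmc-3}, it suffices to observe that $\varphi(C) \Mc H$ contains every element of the form $(c \otimes 1_H) \Mc h = c \otimes h$, so this submodule coincides with all of $C \otimes H$.

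The interesting check is \eqref{gmc-1}, and this is where the axiom \eqref{pmc-3} of a partial module coalgebra must carry the whole weight. I would evaluate both sides on an arbitrary simple tensor $d = c \otimes k$. Using $\Delta_{C\otimes H}(c\otimes k) = (c_1 \otimes k_1) \otimes (c_2 \otimes k_2)$ and $\Ve_{C\otimes H}(x \otimes y) = \Ve_C(x)\Ve_H(y)$, the left-hand side becomes
\[
	\pi\bigl(\pi(c \otimes k) \Mc h\bigr) = \pi\bigl((c \Pmc k) \otimes h\bigr) = \bigl((c \Pmc k) \Pmc h\bigr) \otimes 1_H,
\]
while the right-hand side becomes
\[
	\pi\bigl(\Ve(\pi(d_1))\, d_2 \Mc h\bigr) = \Ve_C(c_1 \Pmc k_1)\, (c_2 \Pmc k_2 h) \otimes 1_H.
\]
Equality of the two now follows directly from axiom \eqref{pmc-3} applied to $c \Pmc k \Pmc h$, which is precisely $\Ve(c_1 \Pmc k_1)(c_2 \Pmc k_2 h)$.

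The only mildly subtle point is bookkeeping: confirming that \eqref{gmc-1} really needs only the partial (rather than the symmetric) axiom, and that $\pi$ is genuinely valued in $\varphi(C)$ so that the induced partial action $\Ipmc$ is defined and transported correctly by $\varphi$. Both are immediate from the explicit formula for $\pi$. I do not anticipate any serious obstacle; the main ``idea'' is simply to recognize that the partial-action axiom \eqref{pmc-3} is exactly the identity that \eqref{gmc-1} demands of the projection, so the tensor coalgebra $C \otimes H$ equipped with $\varphi$ and $\pi$ is the natural candidate, and the verification is essentially mechanical.
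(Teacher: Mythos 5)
Your proposal is correct and follows essentially the same route as the paper: the same standard candidate $(C\otimes H,\varphi,\pi)$, the same one-line verification of \eqref{gmc-2} and \eqref{gmc-3}, and the same reduction of \eqref{gmc-1} to axiom \eqref{pmc-3} via the tensor coalgebra structure of $C\otimes H$. The computations match the paper's proof of \autoref{teo-gmc} step for step (the paper merely writes the \eqref{gmc-1} chain of equalities in the opposite direction).
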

\begin{proof}
	Let $C$ be a left partial $H$-module coalgebra, so from Examples \ref{ex-mc-hopf} and \ref{ex-mc-tensor}, we know that $C \otimes H$ is an $H$-module coalgebra, with action given by right multiplication in $H$. By the above noticed, we have the maps ${\varphi\colon C \to C \otimes H}$ and ${\pi\colon C \otimes H \to \varphi(C)}$, as required in Definition \ref{gmc}. Then we only need to show that the conditions (\ref{gmc-1}) and (\ref{gmc-2}) hold.
	
	(\ref{gmc-1}): For every $h, k \in H$ and $c\in C$, we have
	\begin{alignat*}4
		\pi[\Ve(\pi((c\otimes h)_1))(c\otimes h)_2 \Mc k]
			& \REf{pmc-2}  \pi[\Ve(\pi(c_1\otimes h_1))(c_2\otimes h_2) \Mc k]\\
			& \EREf \Ve(c_1\Pmc h_1)\pi[(c_2\otimes h_2) \Mc k]\\
			& \EREf \Ve(c_1\Pmc h_1)\pi[c_2\otimes h_2 k]\\
			& \EREf \Ve(c_1\Pmc h_1)(c_2\Pmc h_2 k)\otimes 1_H\\
			& \REf{pmc-3}  (c\Pmc h) \Pmc k \otimes 1_H\\
			& \EREf \pi[(c\Pmc h) \otimes k]\\
			& \EREf \pi[((c\Pmc h) \otimes 1_H) \Mc k]\\
			& \EREf \pi[\pi(c\otimes h) \Mc k].
	\end{alignat*}
	
	(\ref{gmc-2}): Let $h\in H$ and $c\in C$, then
	\begin{alignat*}4
		\varphi (c) \Pmc h
			& \EREf \pi[\varphi (c) \Mc h]\\
			& \EREf \pi[(c \otimes 1_H) \Mc h]\\
			& \EREf \pi[c \otimes  h]\\
			& \EREf c \Pmc  h \otimes 1_H\\ 
			& \EREf \varphi(c \Pmc  h). 
	\end{alignat*}
	
	Moreover, by the definitions of $\pi, \varphi$ and $\Mc$ it follows that $\varphi(C) \Mc H = C \otimes H$. Therefore $C\otimes H$ is a globalization for $C$.
\end{proof}

The globalization above constructed is called the \emph{standard globalization} and it is close related with the \emph{standard globalization} for partial module algebras, as follows.

\begin{teo}\label{gpmc<->gpma-standard}
	Let $C$ be a right partial $H$-module coalgebra, then the standard globalization for $C$ generates the standard globalization for $C^\ast$ as left partial $H$-module algebra.
\end{teo}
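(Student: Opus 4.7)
The plan is to stitch together the computations made in the paragraphs just before the theorem, observing that they already provide every essential ingredient. Starting from the standard globalization $(C\otimes H, \varphi, \pi)$ for the partial module coalgebra $C$, the map $\phi\colon C^\ast \to (C\otimes H)^\ast$ defined by $\phi(\alpha)=\alpha\circ\varphi^{-1}\circ\pi$ is a multiplicative monomorphism, and by \autoref{gpmc<->gpma} the triple $(H\Ma\phi(C^\ast),\phi)$ is a globalization of $C^\ast$ as a left partial $H$-module algebra. I would then transport this globalization across the adjoint isomorphism $\Psi\colon (C\otimes H)^\ast \to \Hom(H,C^\ast)$ already introduced, which was shown above to be an $H$-module map satisfying $\Psi\circ\phi=\Phi$, where $\Phi(\alpha)(h)=h\Pma\alpha$ is precisely the map Alves and Batista use in \cite{AB2}*{Theorem 1} to build the standard globalization for a left partial $H$-module algebra.

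Concretely, the main step is to verify that $\Psi$ is not merely an $H$-module map but an isomorphism of $H$-module algebras: one checks, using the tensor coalgebra structure on $C\otimes H$ and the convolution product on $(C\otimes H)^\ast$, that $\Psi(\xi\ast\eta)(h)(c) = \xi(c_1\otimes h_1)\eta(c_2\otimes h_2) = (\Psi(\xi)(h_1)\ast\Psi(\eta)(h_2))(c) = (\Psi(\xi)\ast\Psi(\eta))(h)(c)$, where the outer convolution is that of $\Hom(H,C^\ast)$. Once this is granted, $\Psi$ restricts to an $H$-module algebra isomorphism
\[
\Psi\colon H\Ma\phi(C^\ast) \;\longto\; \Psi(H\Ma\phi(C^\ast)) = H\Ma\Psi(\phi(C^\ast)) = H\Ma\Phi(C^\ast),
\]
where the first equality uses that $\Psi$ is an $H$-module map and the second uses the already established identity $\Psi\circ\phi=\Phi$.

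Under this isomorphism, the inclusion $\phi\colon C^\ast\hookrightarrow H\Ma\phi(C^\ast)$ becomes the inclusion $\Phi\colon C^\ast\hookrightarrow H\Ma\Phi(C^\ast)$, so the pair $(H\Ma\Phi(C^\ast),\Phi)$ is, up to this canonical isomorphism, the globalization obtained from $(C\otimes H, \varphi,\pi)$ by dualization. Since $(H\Ma\Phi(C^\ast),\Phi)$ is by construction the standard globalization of $C^\ast$ in the sense of \cite{AB2}, this is exactly the claim of the theorem. The obstruction I anticipate is purely notational: to articulate in precise terms what "dual" means here (namely, image under $\Psi\circ(-)$ applied to the dualization of the partial module coalgebra globalization), and to confirm that $\Psi$ preserves the convolution algebra structure so that the transport of partial and global actions is an algebra-theoretic, not merely linear, correspondence.
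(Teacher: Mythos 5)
Your proposal is correct and follows essentially the same route as the paper, which presents this theorem as a summary of the discussion immediately preceding it: dualize the standard globalization $(C\otimes H,\varphi,\pi)$ via $\phi=(\varphi^{-1}\circ\pi)^\ast$, invoke \autoref{gpmc<->gpma}, and transport along the adjoint isomorphism $\Psi$ using $\Psi\circ\phi=\Phi$. Your explicit check that $\Psi$ intertwines the convolution product of $(C\otimes H)^\ast$ with that of $\Hom(H,C^\ast)$ is a detail the paper asserts without computation, and it is verified correctly.
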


\begin{proof}
	From Theorem \ref{teo-gmc}, we have that $({C \otimes H}, \varphi, \pi)$ is the standard globalization for $C$. Consider the multiplicative map
	\[
		\begin{alignedat}1
			\phi\colon C^\ast & \longto  (C \otimes H)^\ast\\
			\alpha & \longmapsto  \alpha \circ \varphi^{-1}\circ\pi.
		\end{alignedat}
	\]
	Thus, by the Theorem \ref{gpmc<->gpma}, we have that $(H \Ma \phi(C^\ast),\phi)$ is a globalization for $C^\ast$, where the action on $(C \otimes H)^\ast$ is given by
	
	\[
		\begin{alignedat}1
			\Ma\colon H \otimes (C \otimes H)^\ast & \longrightarrow (C \otimes H)^\ast\\
			h \otimes \xi & \longmapsto  (h \Ma \xi)(c \otimes k) = \xi (c \otimes k\, h),
		\end{alignedat}
	\]
	for every $\xi\in (C \otimes H)^\ast$, $c \in C$ and $h,k \in H$.
	
	Now, consider the following algebra isomorphism given by the adjoint isomorphism
	\[
		\begin{alignedat}1
			\Psi\colon(C \otimes H)^\ast & \longto  \Hom(H,\,C^\ast)\\
			\xi & \longmapsto  [\Psi(\xi) (h)](c) = \xi(c \otimes h),
		\end{alignedat}
	\]
	which is an $H$-module morphism. In fact, let $h,\,k\in H$, $c\in C$ and $\xi\in(C\otimes H)^\ast$, so
	\begin{alignat*}4
		\{[\Psi(h \Ma \xi)](k)\}(c)
			& \EREf [(h \Ma \xi)](c\otimes k)\\
			& \EREf \xi(c\otimes k\,h)\\
			& \EREf \{[\Psi(\xi)](k\,h)\}(c)\\
			& \EREf \{[h\Ma\Psi(\xi)](k)\}(c)
	\end{alignat*}
	and, therefore, $\Psi$ is an $H$-module map. Moreover, composing $\Psi$ with $\phi$ we obtain
	\begin{alignat*}4
		\{[\Psi\circ\phi(\alpha)](h)\}(c)
			& \EREf \phi(\alpha)(c \otimes h)\\
			& \EREf \alpha(\varphi^{-1}(\pi(c \otimes h)))\\
			& \EREf \alpha(\varphi^{-1}(c \Pmc h\otimes 1_H))\\
			& \EREf \alpha(c \Pmc h)\\
			& \EREf (h \Pma \alpha)(c)\\
			& \EREf [\Phi(\alpha)(h)](c),
	\end{alignat*}
	where $\Phi\colon C^\ast \to \Hom(H,\,C^\ast)$, given by $\Phi(\alpha)(h) = h \Pma \alpha$, for all $h\in H$ and $\alpha\in C^\ast$ is the multiplicative map that appears in the construction of the standard globalization, replacing $A$ by $C^\ast$ (cf.~\cite[Theorem~1]{AB2}).
\end{proof}

\section{Partial coaction on coalgebras} \label{c3}

\subsection{Partial comodule coalgebra}

Given two vector spaces  $V$ and $W$, we write $\tau_{\scriptscriptstyle V,W}$ to denote the standard isomorphism between $V \otimes W$ and $W \otimes V$.

\begin{dfn}[Comodule coalgebra]\label{cc}
	A \emph{left $H$-comodule coalgebra} is a pair $(D, \lambda)$, where $D$ is a coalgebra and $\lambda\colon D \to H \otimes D$ is a linear map, such that, for all $d\in D$, the following conditions hold:
	\begin{enumerate}\Not{CC}
		\item $(\Ve_H \otimes I)\lambda(d) = d$;\label{cc-1}
		\item $(I \otimes \Delta_D)\lambda (d) = (m_H \otimes I \otimes I ) (I \otimes \tau_{D, H} \otimes I)(\lambda \otimes \lambda)\Delta_D (d)$;\label{cc-2}
		\item $(I \otimes \lambda)\lambda (d) = (\Delta_H \otimes I)\lambda(d) $\label{cc-3}.
	\end{enumerate}
	In this case, we say that \emph{$H$ coacts on} $D$ via $\lambda$, or that $\lambda$ is a \emph{coaction of $H$ on $D$}. We will also call it a global comodule coalgebra to differ explicitly from the partial one.
\end{dfn}

We can also see the above definition in a categorical approach, in the following sense (cf.~\cite[Definition~11.3.7]{R}): \emph{a $\Bbbk$-vector space $D$ is a left $H$-comodule coalgebra if it is a coalgebra object in the category of left $H$-comodules.}

From this categorical point of view, one additional condition is required in Definition \ref{cc}, that is,
\begin{equation}\label{cond-add-cc}
	(I \otimes \Ve_D)\lambda(d) = \Ve_D(d)1_H,
\end{equation}
for all $d\in D$.

Since $H$ is a Hopf algebra (so it has an antipode), thus this additional condition may be obtained from the another ones, as stated below.

\begin{prop}\label{accc}
	Let $D$ be a left $H$-comodule coalgebra in the sense of Definition \ref{cc}. Then $(I \otimes \Ve_D)\lambda(d) = \Ve_D(d)1_H,$ for any $d \in D$.\qed
\end{prop}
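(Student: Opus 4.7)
My plan is to introduce the auxiliary linear map $f\colon D \to H$ given by $f(d) := d\U \Ve_D(d\Z) = (I \otimes \Ve_D)\lambda(d)$, so that the proposition reduces to the identity $f(d) = \Ve_D(d)\,1_H$. The strategy is to derive a three-tensor reformulation from (CC2) and (CC3), and then apply the antipode once to collapse both sides via (CC1).

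First I would apply $I \otimes \Ve_D \otimes I$ to axiom (CC2), using the counit identity $\Ve_D(d\Z_1)\,d\Z_2 = d\Z$ to simplify the left-hand side. This yields the commutation relation
\[
\lambda(d) \;=\; f(d_1)\,d_2\U \otimes d_2\Z.
\]

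Next I would apply $I \otimes \lambda$ to both sides of this equation and invoke axiom (CC3), written as $(\Delta_H \otimes I)\lambda = (I \otimes \lambda)\lambda$, both on the left-hand side and (applied at $d_2$) on the right-hand side, to obtain
\[
d\U_1 \otimes d\U_2 \otimes d\Z \;=\; f(d_1)\,d_2\U_1 \otimes d_2\U_2 \otimes d_2\Z.
\]
Then I would apply $S$ to the middle tensorand and multiply it into the first slot, that is, I would apply $(m_H \circ (I \otimes S)) \otimes I$ to this three-tensor identity. The antipode axiom $x_1 S(x_2) = \Ve_H(x)\,1_H$ telescopes the two middle $H$-factors on each side, after which (CC1) absorbs the resulting $\Ve_H$-factor into the $D$-slot, leaving
\[
1_H \otimes d \;=\; f(d_1) \otimes d_2.
\]

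Finally, applying $I \otimes \Ve_D$ to this last identity and once more invoking the counit of $D$ on the right-hand side gives $\Ve_D(d)\,1_H = f(d_1)\Ve_D(d_2) = f(d)$, which is exactly the statement of the proposition. The main obstacle I foresee is finding the right coupling of (CC2) and (CC3) so that a single application of $S$ telescopes both sides simultaneously through (CC1); once this structural step is identified, the remaining manipulations are routine counit computations.
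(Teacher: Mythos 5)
Your proof is correct and is essentially the paper's own argument, reorganized: the relation $\lambda(d) = f(d_1)\,d_2\U \otimes d_2\Z$ that you extract from (CC2) and the counit, followed by the application of (CC3) on both sides and a single antipode telescoping absorbed through (CC1), reproduces exactly the chain of substitutions in the paper's one-line computation from $\Ve_D(d)1_H$ to $d\U\Ve_D(d\Z)$. All steps check out, including the slightly delicate invocation of (CC3) at $d_2$ inside the three-tensor identity, which is legitimate since it amounts to composing linear maps with both sides of the coassociativity axiom.
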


\begin{obs}
	From the above proposition one can extend the notion of comodule coalgebra for non-counital coalgebras.
\end{obs}

Now we exhibit some classical examples of comodule coalgebras (cf. \cite[Section~11.3]{R}).

\begin{ex}
	A Hopf algebra $H$ is an $H$-comodule coalgebra with coaction ${\lambda\colon H \to H \otimes H}$ given by
	\[
		\lambda(h) = h_1 S(h_3) \otimes  h_2,
	\]
	for any $h\in H$.
\end{ex}

\begin{ex}
	Any coalgebra $D$ is an $H$-comodule coalgebra with coaction ${\lambda\colon D \to H \otimes D}$ defined by $\lambda(d) = 1_H \otimes d$, for every $d\in D$.
\end{ex}

\begin{ex}
	If the Hopf algebra $H$ is finite dimensional, then $H^\ast$ is an $H$-comodule coalgebra with structure given by $\lambda\colon H^\ast \to H \otimes H^\ast$ with $\lambda(f) = \sum\limits_{i=1}^{n} h_i \otimes f\ast h_i^\ast$, where $\{h_i\}_{i=1}^n$ and $\{h_i^\ast\}_{i=1}^n$ are dual basis for $H$ and $H^\ast$, respectively.
\end{ex}

\begin{ex}
	Let $C$ be a left $H$-comodule coalgebra with coaction $\lambda$ and $D$ a coalgebra, then $C \otimes D$ is a left $H$-comodule coalgebra via $\lambda \otimes I_D$.
\end{ex}

\subsubsection{Definitions and correspondences}

\begin{dfn}[Partial comodule coalgebra]\label{pcc}{\rm\cite[Definition~6.1]{BV}}
	A \emph{left partial $H$-comodule coalgebra} is a pair $(C,\lambda')$, where $C$ is a coalgebra and $\lambda' \colon C \to H \otimes C$ is a linear map, such that, for any $c\in C$, the following conditions hold:
	\begin{enumerate}\Not{PCC}
		\item $(\Ve_H \otimes I)\lambda'(c) = c$;\label{pcc-1}
		\item $(I \otimes \Delta_C)\lambda' (c) = (m_H \otimes I \otimes I ) (I \otimes \tau_{C,H} \otimes I)(\lambda' \otimes \lambda')\Delta_C (c)$;\label{pcc-2}
		\item $(I \otimes \lambda')\lambda' (c) = (m_H \otimes I \otimes I)\{ \nabla \otimes [(\Delta_H \otimes I)\lambda']\}\Delta_C (c)$,\label{pcc-3}
	\end{enumerate}
	where $\nabla\colon C \to H$ is defined by $\nabla(c) = (I \otimes \Ve_C)\lambda'(c)$.
	
	The partial comodule coalgebra is said \emph{symmetric} if the following additional condition holds, for any $c\in C$:
	\begin{enumerate}\Not[3]{PCC}
		\item $(I \otimes \lambda')\lambda' (c) = (m_H \otimes I \otimes I)(I \otimes \tau_{{}_{H\otimes C, H}})\{[(\Delta_H \otimes I)\lambda'] \otimes \nabla\}\Delta_C (c)$.\label{pcc-4}
	\end{enumerate}
\end{dfn}

\begin{obs}
	For a partial comodule coalgebra $C$ via $\lambda'$ we use the Sweedler's notation $\lambda'(c) = c\Up \otimes c\Zp$, where the summation is understood. The bar over the upper index is useful to distinguish partial from global comodule coalgebras when working with both structures in a single computation.
\end{obs}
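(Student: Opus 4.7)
The final statement is a notational declaration rather than a mathematical proposition, so the ``plan'' reduces to verifying that the convention is internally consistent and reproducing the axioms of \autoref{pcc} in the new symbols. Well-definedness is immediate: for any $c\in C$, the image $\lambda'(c)$ is a fixed element of $H\otimes C$ and therefore equals a (non-canonical) finite sum $\sum_i h_i \otimes c_i$. The symbol $c\Up \otimes c\Zp$ is \emph{defined} to be this sum, with the superscript $-\overline{1}$ labelling the $H$-tensorand, $-\overline{0}$ labelling the $C$-tensorand, and summation suppressed exactly as in the familiar global Sweedler notation $c\U \otimes c\Z$ recorded in \autoref{cc}.

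To confirm that the shorthand interacts cleanly with the defining axioms, one rewrites each by expanding the tensor-factor projections. For instance, \eqref{pcc-1} becomes $\Ve_H(c\Up)\,c\Zp = c$; the auxiliary map takes the form $\nabla(c) = c\Up\,\Ve_C(c\Zp)$; and \eqref{pcc-2} reads
\[
c\Up \otimes (c\Zp)_1 \otimes (c\Zp)_2 \;=\; c_1{}\Up\,c_2{}\Up \otimes c_1{}\Zp \otimes c_2{}\Zp,
\]
while \eqref{pcc-3} and \eqref{pcc-4} admit similar (if longer) rewritings that feature $\nabla$ applied to $c_1$ together with an iterated application of $\lambda'$ to $c_2$. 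No mathematical obstruction arises, since each axiom is already phrased purely in terms of the tensor-factor projections ($\Ve_H \otimes I$, $I \otimes \Delta_C$, $I \otimes \lambda'$, and so on) that Sweedler symbols abbreviate by design.

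The only concern the convention is intended to address is expository: elsewhere in the paper the global coaction $\lambda$ is abbreviated $d\U \otimes d\Z$ with no overline, so without a typographic difference a mixed computation could not visually distinguish global from partial structures. The bar on the partial indices supplies precisely that distinction, and this is the whole content of the remark. Consequently there is nothing to prove beyond the tautology that overlined and unoverlined superscripts are distinct symbols; the only ``discipline'' the reader must exercise is to carry the bar whenever the coaction in play is partial rather than global.
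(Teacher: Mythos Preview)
Your assessment is correct: the remark is a notational convention, not a proposition, and the paper offers no proof either---it simply records the Sweedler-style shorthand and the reason for the overline. Your additional verification that the axioms can be rewritten in the new symbols is harmless but goes beyond what the paper does; essentially your approach and the paper's coincide.
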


\begin{prop}{\rm \cite[Lemma~6.3~and~Corollary~6.4]{BV}}
	Let $C$ be left partial $H$-comodule coalgebra, then, for all $c \in C$, the following equa\-li\-ties hold:
	\begin{equation}
		c\Up \otimes c\Zp = \nabla(c_1) c_2\Up \otimes c_2 \Zp = c_1\Up \nabla(c_2) \otimes c_1\Zp \label{nabla-pcc2}
	\end{equation}
	and
	\begin{equation}
		\nabla(c_1) \nabla(c_2) = \nabla(c). \label{nabla-idem}
	\end{equation}
\end{prop}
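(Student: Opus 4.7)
The plan is to derive both identities by applying suitable instances of the counit $\Ve_C$ to axiom \eqref{pcc-2}, which already encodes all of the needed information; no appeal to \eqref{pcc-3} is necessary.

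For the first equality of \eqref{nabla-pcc2}, I would apply $I\otimes\Ve_C\otimes I$ to both sides of \eqref{pcc-2}. The left-hand side $c\Up \otimes c\Zp{}_1\otimes c\Zp{}_2$ collapses, via the counit axiom for $\Delta_C$, to $c\Up \otimes c\Zp$. On the right-hand side, the scalar factor $\Ve_C(c_1\Zp)$ combines with $c_1\Up$ to produce $\nabla(c_1)$, yielding $\nabla(c_1)\, c_2\Up \otimes c_2\Zp$. The second equality of \eqref{nabla-pcc2} is entirely symmetric: applying $I\otimes I\otimes \Ve_C$ instead, the scalar $\Ve_C(c_2\Zp)$ is absorbed into $c_2\Up$ to form $\nabla(c_2)$, producing $c_1\Up\nabla(c_2)\otimes c_1\Zp$.

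For \eqref{nabla-idem}, I would apply $I\otimes \Ve_C\otimes \Ve_C$ to \eqref{pcc-2}. The scalar identity $\Ve_C(c\Zp{}_1)\Ve_C(c\Zp{}_2)=\Ve_C(c\Zp)$, which holds in any coalgebra (obtained by applying $\Ve_C$ to the counital identity $\Ve_C(c_1)c_2=c$), reduces the left-hand side to $c\Up\Ve_C(c\Zp)=\nabla(c)$. Regrouping the two pairs of scalars with their matching $H$-factors on the right-hand side gives $\bigl(c_1\Up\Ve_C(c_1\Zp)\bigr)\bigl(c_2\Up\Ve_C(c_2\Zp)\bigr)=\nabla(c_1)\nabla(c_2)$, which is the desired identity.

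There is no real obstacle. The only care required is careful bookkeeping with two independent Sweedler summations, one arising from $\Delta_C(c)=c_1\otimes c_2$ and the other from each $\lambda'(c_i)=c_i\Up\otimes c_i\Zp$, together with the elementary fact that scalars commute past elements of $H$. The definition $\nabla=(I\otimes\Ve_C)\lambda'$ is precisely what makes the partial counit projections on the right-hand side of \eqref{pcc-2} reassemble into copies of $\nabla$.
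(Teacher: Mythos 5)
Your proposal is correct and matches the paper's intended argument: the paper's proof simply states that the identities follow directly from the definition of $\nabla$ and (\ref{pcc-2}), and your applications of $I\otimes\Ve_C\otimes I$, $I\otimes I\otimes\Ve_C$, and $I\otimes\Ve_C\otimes\Ve_C$ to (\ref{pcc-2}) are exactly the computations being left implicit. No gaps.
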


\begin{ex}
	Every left $H$-comodule coalgebra is a left partial $H$-comodule coalgebra.
\end{ex}

The next result gives us a simple method to construct new examples of partial comodule coalgebras. The proof is straightforward and it will be omitted.

\begin{prop}
	Let $\lambda'\colon \Bbbk \to H \otimes \Bbbk$ be a linear map and $h\in H$ such that $\lambda'(1_\Bbbk) = h \otimes 1_\Bbbk$. Then the ground field $\Bbbk$ is a left partial $H$-comodule coalgebra if and only if the following conditions hold:
	\begin{enumerate}
		\item $\Ve_H(h) = 1_\Bbbk$;
		\item $h \otimes h = (h \otimes 1_H)\Delta(h)$.\qed
	\end{enumerate}
\end{prop}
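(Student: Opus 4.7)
The plan is to verify each of the three axioms of \autoref{pcc} for the one-dimensional coalgebra $\Bbbk$, for which $\Delta_\Bbbk(1_\Bbbk)=1_\Bbbk\otimes 1_\Bbbk$ and $\Ve_\Bbbk(1_\Bbbk)=1_\Bbbk$. Since $\lambda'$ is determined by its value $\lambda'(1_\Bbbk)=h\otimes 1_\Bbbk$, and under the canonical identifications $H^{\otimes n}\otimes\Bbbk\cong H^{\otimes n}$, each of \eqref{pcc-1}, \eqref{pcc-2}, \eqref{pcc-3} will collapse to a single identity in some $H^{\otimes n}$ that I can compare to (i) and (ii).

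First I would read off \eqref{pcc-1}: $(\Ve_H\otimes I)(h\otimes 1_\Bbbk)=\Ve_H(h)\,1_\Bbbk$, which equals $1_\Bbbk$ exactly when (i) holds. Next I would handle \eqref{pcc-3}; computing $\nabla(1_\Bbbk)=(I\otimes\Ve_\Bbbk)(h\otimes 1_\Bbbk)=h$ and $(\Delta_H\otimes I)\lambda'(1_\Bbbk)=h_1\otimes h_2\otimes 1_\Bbbk$, the right-hand side of \eqref{pcc-3} becomes $(m_H\otimes I\otimes I)(h\otimes h_1\otimes h_2\otimes 1_\Bbbk)=hh_1\otimes h_2\otimes 1_\Bbbk$, while the left-hand side is $(I\otimes\lambda')(h\otimes 1_\Bbbk)=h\otimes h\otimes 1_\Bbbk$. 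Thus \eqref{pcc-3} is equivalent to $h\otimes h=hh_1\otimes h_2=(h\otimes 1_H)\Delta(h)$, i.e.\ condition (ii).

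Finally I would check \eqref{pcc-2}: its left-hand side at $1_\Bbbk$ is $h\otimes 1_\Bbbk\otimes 1_\Bbbk$, while unfolding $(m_H\otimes I\otimes I)(I\otimes\tau_{\Bbbk,H}\otimes I)(\lambda'\otimes\lambda')(1_\Bbbk\otimes 1_\Bbbk)$ gives $h^2\otimes 1_\Bbbk\otimes 1_\Bbbk$. So \eqref{pcc-2} reduces to the apparently \emph{extra} condition $h^2=h$. The subtlety the plan must address is precisely that this equation is not listed among the hypotheses, but is in fact forced by (i) and (ii): applying $I\otimes\Ve_H$ to (ii) yields $h\,\Ve_H(h)=hh_1\Ve_H(h_2)=h\cdot h$, and using (i) on the left side gives $h=h^2$. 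Combining the three equivalences, (i) and (ii) together amount exactly to the full list of axioms \eqref{pcc-1}--\eqref{pcc-3}, which completes both directions of the iff. The only delicate step is the book-keeping of the tensor identifications with $\Bbbk$ together with noticing that \eqref{pcc-2} does not impose a genuinely new constraint.
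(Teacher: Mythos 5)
Your proof is correct; the paper omits this proof as ``straightforward,'' and your direct verification of \eqref{pcc-1}--\eqref{pcc-3} is the intended argument. In particular, your observation that \eqref{pcc-2} forces $h^2=h$, and that this is already a consequence of (i) and (ii), is exactly the point recorded in the paper's remark immediately following the proposition (``$h$ is an idempotent in $H$'').
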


As an application of the above result, we present the next example.

\begin{ex}
	Let G be a group, $\lambda'\colon \Bbbk \to \Bbbk G \otimes \Bbbk$ a linear map and $x = \sum\limits_{g \in G} \alpha_g g$ in $\Bbbk G$ such that $\lambda'(1)=x \mathop{\otimes} 1$.  Consider $N=\{g \in G \mid \alpha_g \neq 0 \}$ and suppose that the characteristic of $\Bbbk$ does not divides $|N|$. Then $\Bbbk$ is a left partial $\Bbbk G$-comodule coalgebra via $\lambda'$ if and only if $N$ is a finite subgroup of $G$. In this case, we have that
	\[
		\alpha_g = \dfrac{1}{|N|},
	\]
	for all $g\in N$
\end{ex}

\begin{prop}{\cite{BV}}\label{pcc:cond-global}
	Let $C$ be a left partial $H$-comodule coalgebra via $\lambda'$, then it is a (global) $H$-comodule coalgebra if and only if 
	\[
		c\Up \Ve_C(c\Zp) = \Ve_C (c) 1_H,
	\]
	for all $c\in C$.\hfill\qedsymbol
\end{prop}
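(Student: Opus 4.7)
The plan is to handle the two directions separately, with the forward direction being essentially a citation and the backward direction a short computation that collapses axiom (\ref{pcc-3}) to axiom (\ref{cc-3}).

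For the forward direction, assume that $C$ is a global left $H$-comodule coalgebra via $\lambda'$. Then by \autoref{accc} we have $(I\otimes\Ve_C)\lambda'(c)=\Ve_C(c)1_H$ for every $c\in C$. But the left-hand side is precisely the definition of $\nabla(c)$, so $\nabla(c)=\Ve_C(c)1_H$, as required.

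For the backward direction, suppose that $\nabla(c)=\Ve_C(c)1_H$ for every $c\in C$. Conditions (\ref{pcc-1}) and (\ref{cc-1}) are literally identical, and so are (\ref{pcc-2}) and (\ref{cc-2}); thus I only need to verify that (\ref{pcc-3}) reduces to (\ref{cc-3}). Starting from the right-hand side of (\ref{pcc-3}) and applying the hypothesis to the first tensor factor, I compute
\begin{align*}
(m_H\otimes I\otimes I)\{\nabla\otimes[(\Delta_H\otimes I)\lambda']\}\Delta_C(c)
&= \nabla(c_1)\,c_2\Up{}_1\otimes c_2\Up{}_2\otimes c_2\Zp\\
&= \Ve_C(c_1)1_H\,c_2\Up{}_1\otimes c_2\Up{}_2\otimes c_2\Zp\\
&= (\Ve_C(c_1)c_2)\Up{}_1\otimes (\Ve_C(c_1)c_2)\Up{}_2\otimes (\Ve_C(c_1)c_2)\Zp\\
&= (\Delta_H\otimes I)\lambda'(c),
\end{align*}
using the counit property $\Ve_C(c_1)c_2=c$ in the last step. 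Hence (\ref{pcc-3}) becomes exactly $(I\otimes\lambda')\lambda'(c)=(\Delta_H\otimes I)\lambda'(c)$, which is (\ref{cc-3}), so $C$ is a global comodule coalgebra.

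There is no real obstacle here; the only thing to be mindful of is being explicit that (\ref{pcc-1})--(\ref{pcc-2}) coincide with (\ref{cc-1})--(\ref{cc-2}) so that the whole content of the equivalence is carried by the coassociativity axiom, and keeping the Sweedler bookkeeping straight when stripping $\nabla$ out of (\ref{pcc-3}).
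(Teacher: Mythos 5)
Your proof is correct and follows essentially the same route as the paper: the forward direction is the citation of \autoref{accc}, and the backward direction substitutes $\nabla(c_1)=\Ve_C(c_1)1_H$ into (\ref{pcc-3}) and uses the counit property to collapse it to (\ref{cc-3}). The only cosmetic difference is that you simplify the right-hand side of (\ref{pcc-3}) while the paper chains equalities starting from $(I\otimes\lambda')\lambda'(c)$.
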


Given a Hopf algebra $H$, we say that the finite dual $H^0$ \emph{separate points} if it is dense on $H^\ast$ in the finite topology, i.e., if $h\in H$ is such that $f(h)=0$, for all $f\in H^0$, then $h=0$.

For a coalgebra $C$ and a linear map $\lambda'\colon C \to H \otimes C$ (denoting by $\lambda' (c) = c\Up \otimes c\Zp$) we have two induced linear maps $\Pmci\colon C \otimes H^\ast \to C$ and $\Pmai\colon H^\ast \otimes C^\ast \to C^\ast$, given respectively by
\begin{align}
	c \Pmci f & =  f(c\Up) c\Zp\\
	(f \Pmai \alpha)(c) & =  f(c\Up) \alpha(c\Zp),
\end{align}
for all $c\in C, \alpha \in C^{\ast}$ and $f\in H^{\ast}$. Since, in general, $H^\ast$ is not a Hopf algebra, thus we can restrict $\Pmci$ and $\Pmai$ to the subspaces $C \otimes H^0$ and $H^0 \otimes C^\ast$, respectively. Therefore, under the assumption that $C$ is a left partial $H$-comodule coalgebra, we can show the following.

\begin{teo}\label{inducedfromcomcoalg}
	With the above notations, if $C$ is a left partial $H$-comodule coalgebra via $\lambda'$, then the following statements hold:
	\begin{enumerate}\Not{}
		\item $C$ is a right partial $H^0$-module coalgebra via $\Pmci$; \label{inducedfromcomcoalg-1}
		\item $C^\ast$ is a left partial $H^0$-module algebra via $\Pmai$. \label{inducedfromcomcoalg-2}
	\end{enumerate}
\end{teo}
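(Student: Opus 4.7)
My plan is to establish (1) directly from the axioms of \autoref{pcc}, and to derive (2) as a formal consequence of (1) via \autoref{pma->pmc} applied to the Hopf algebra $H^0$.

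For part (1), writing $\lambda'(c) = c\Up \otimes c\Zp$ and recalling that $c \Pmci f = f(c\Up)\, c\Zp$, I would verify the three axioms of \autoref{pmc} in sequence. Axiom (\ref{pmc-1}) is immediate from (\ref{pcc-1}), since $1_{H^0} = \Ve_H$. Axiom (\ref{pmc-2}) would follow by applying $\Delta_C$ to the definition of $\Pmci$, invoking (\ref{pcc-2}) to rewrite the iterated tensor as $c_1\Up c_2\Up \otimes c_1\Zp \otimes c_2\Zp$, and then splitting $f(c_1\Up c_2\Up) = f_1(c_1\Up)\, f_2(c_2\Up)$ via the defining property of the coproduct of $H^0$.

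The main work lies in axiom (\ref{pmc-3}). The strategy is to expand $(c \Pmci f) \Pmci g$ so that the iterated coaction $(I \otimes \lambda')\lambda'(c)$ appears explicitly, and then to substitute the right-hand side of (\ref{pcc-3}) to convert the expression into one involving $\nabla(c_1)$, $\Delta_H(c_2\Up)$ and $c_2\Zp$. Two structural facts about $H^0$ will be used in succession: first, $f(\nabla(c_1)\, x) = f_1(\nabla(c_1))\, f_2(x)$ peels off the $\nabla$-factor; second, the convolution product of $H^0$ satisfies $(f_2 g)(y) = f_2(y_1)\, g(y_2)$, which collapses the remaining $H$-terms into $(f_2 g)(c_2\Up)$. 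The identification $f_1(\nabla(c_1)) = \Ve_C(c_1 \Pmci f_1)$, immediate from the definitions of $\nabla$ and $\Pmci$, will then yield the desired equality $\Ve(c_1 \Pmci f_1)\,(c_2 \Pmci f_2 g)$. I expect this step to be the main obstacle: matching the $\nabla$-twist that appears in (\ref{pcc-3}) with the counital correction of (\ref{pmc-3}) requires careful bookkeeping of the Sweedler indices and deploying the $H^0$-coproduct identity at exactly the right moment.

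For part (2), a direct inspection shows
\[
(f \Pmai \alpha)(c) = f(c\Up)\, \alpha(c\Zp) = \alpha\bigl(f(c\Up)\, c\Zp\bigr) = \alpha(c \Pmci f)
\]
for every $f \in H^0$, $\alpha \in C^\ast$ and $c \in C$. This is precisely the compatibility relation \eqref{comp-pma<->pmc} of \autoref{pma->pmc}, with $H$ replaced by the Hopf algebra $H^0$. Combined with part (1), this proposition immediately yields that $C^\ast$ is a left partial $H^0$-module algebra via $\Pmai$, with no additional computation required.
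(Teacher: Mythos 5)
Your proposal is correct. For part (1) your plan coincides with the paper's proof: the three axioms of Definition~\ref{pmc} are checked in the same order, and your handling of (\ref{pmc-3}) --- expanding $(c \Pmci f)\Pmci g$ into the iterated coaction, substituting (\ref{pcc-3}), peeling off $f_1(\nabla(c_1)) = \Ve_C(c_1 \Pmci f_1)$ and collapsing $f_2(c_2\Up{}_1)g(c_2\Up{}_2)$ into $(f_2 \ast g)(c_2\Up)$ --- is exactly the computation the paper carries out. Where you diverge is part (2): the paper proves it by a second direct verification of axioms (\ref{pma-1})--(\ref{pma-3}) for $\Pmai$, essentially repeating the dualized version of the part-(1) computation, whereas you observe that $(f \Pmai \alpha)(c) = \alpha(c \Pmci f)$ is the compatibility relation \eqref{comp-pma<->pmc} for the Hopf algebra $H^0$ and invoke Proposition~\ref{pma->pmc} together with part (1). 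This is legitimate --- $H^0$ is indeed a Hopf algebra, so the proposition applies --- and it is more economical; in fact the paper itself uses precisely this observation later, in the proof of Theorem~\ref{h0seppontops-pcc-pmci-pmai}, to identify the module-coalgebra and module-algebra structures. What the paper's longer route buys is only self-containedness of the displayed computations; your route buys brevity and makes the logical dependence on Proposition~\ref{pma->pmc} explicit. The one thing to keep in mind when writing it up is to say a word about why $\Pmci$ and $\Pmai$ restrict from $H^\ast$ to $H^0$ and why the Sweedler splitting $f(xy)=f_1(x)f_2(y)$ with $f_1\otimes f_2\in H^0\otimes H^0$ is available, since that is exactly where the finite dual enters.
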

\begin{proof}
	
	$(\ref{inducedfromcomcoalg-1}):$ Taking the dual pairing between $H$ and $H^0$ given by evaluation, then by Theorem~6.7 of \cite{BV} we have the desired.
	
	$(\ref{inducedfromcomcoalg-2}):$ In this case, taking the dual pairing between $C$ and $C^\ast$ given by evaluation, then the desired follows from Theorem~6.8 of \cite{BV}.
\end{proof}

It is not clear if the converse of the above theorem is true in general, but whenever $H^0$ separate points (so the dual pairing between $H$ and $H^0$ is non-degenerate) it holds, as stated in the next theorem.

\begin{teo}\label{h0seppontops-pcc-pmci-pmai}
	With the above notations, if $H^0$ separate points, then the following conditions are equivalent:
	\begin{enumerate}\Not{}
		\item $C$ is a right partial $H^{0}$-module coalgebra via $\Pmci$;\label{pmcih0sepponntos}
		\item $C$ is a left partial $H^{0}$-module algebra via $\Pmai$;\label{pmaih0sepponntos}
		\item $C$ is a left partial $H$-comodule coalgebra via $\lambda'$.\label{pcch0sepponntos}\hfill\qedsymbol
	\end{enumerate}
\end{teo}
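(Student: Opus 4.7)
The plan is to close a small triangle rather than prove all six implications directly. Two of the implications are already available from earlier results in the paper, one of them is a free consequence of a duality, and only one implication actually uses the new hypothesis that $H^0$ separates points.

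First, the implications (3) $\Rightarrow$ (1) and (3) $\Rightarrow$ (2) are exactly the content of Theorem \ref{inducedfromcomcoalg} and do not require the separation hypothesis. Second, the equivalence (1) $\Leftrightarrow$ (2) follows from Proposition \ref{pma->pmc} applied with $H$ replaced by $H^0$: by construction, the maps $\Pmci$ and $\Pmai$ satisfy the compatibility
\[
(f \Pmai \alpha)(c) \;=\; f(c\Up)\,\alpha(c\Zp) \;=\; \alpha(c \Pmci f),
\]
and this is again free of any separation hypothesis. So the only genuine piece of work left is to close the loop with, say, (1) $\Rightarrow$ (3).

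To prove (1) $\Rightarrow$ (3), I would verify the three axioms (\ref{pcc-1})--(\ref{pcc-3}) of Definition \ref{pcc} for $\lambda'$. Axiom (\ref{pcc-1}) is immediate from (\ref{pmc-1}), since $(\Ve_H \otimes I)\lambda'(c) = c \Pmci 1_{H^0} = c$. For the other two, the strategy is to test the desired equalities (which live in $H \otimes C \otimes C$ and $H \otimes H \otimes C$, respectively) by applying $f \otimes I \otimes I$, respectively $f \otimes g \otimes I$, for arbitrary $f,g \in H^0$. A direct Sweedler computation shows that after such a test each desired equality reduces exactly to axiom (\ref{pmc-2}) (respectively (\ref{pmc-3})) for $\Pmci$, which holds by hypothesis. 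The only nonobvious rewrite is in (\ref{pcc-3}): using $\nabla(c) = c\Up \Ve_C(c\Zp)$ one has $f_1(\nabla(c_1)) = \Ve_C(f_1(c_1\Up)\, c_1\Zp) = \Ve_C(c_1 \Pmci f_1)$, which is precisely the $\Ve$-factor appearing in (\ref{pmc-3}).

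The main obstacle, and the only place the hypothesis enters, is the final step of each verification: passing from the fact that $(f \otimes I \otimes I)(x-y)=0$ (resp. $(f \otimes g \otimes I)(x-y)=0$) for every $f$ (resp. $f,g$) in $H^0$ to the honest equality $x = y$ in $H \otimes C \otimes C$ (resp. $H \otimes H \otimes C$). Writing the difference in the form $\sum_i h_i \otimes v_i$ with $\{v_i\}$ linearly independent in $C \otimes C$ reduces this to showing that an element of $H$ annihilated by every functional in $H^0$ must vanish, which is exactly the separation hypothesis. For the three-fold tensor product in (\ref{pcc-3}) the same reduction is applied twice, once in each $H$-slot, using that if $f(h)\,g(k) = 0$ for all $f,g \in H^0$ then separation forces $h \otimes k = 0$. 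This is precisely the point at which the theorem would fail in general: without separation, the induced structures (1) and (2) exist but carry strictly less information than $\lambda'$.
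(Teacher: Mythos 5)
Your proposal is correct and follows essentially the same route as the paper: the equivalence of (1) and (2) via the compatibility $(f \Pmai \alpha)(c) = \alpha(c \Pmci f)$ and Proposition \ref{pma->pmc}, the implications from (3) via Theorem \ref{inducedfromcomcoalg}, and the remaining implication (1) $\Rightarrow$ (3) by testing the axioms of Definition \ref{pcc} against functionals $f \otimes I \otimes I$ and $f \otimes g \otimes I$ and invoking separation of points. Your explicit justification of the final separation step (including the two-slot case in $H \otimes H \otimes C$) is a detail the paper leaves implicit, but the argument is the same.
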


The above theorem can be translated in the following commutative diagram:

\begin{equation}\label{diag-pcc->mpc+pma}
	\begin{gathered}
		\xymatrix{
			(C, \lambda', H)\ar@<-4pt>[rrrr]\ar@<4pt>@{<--}[rrrr]^-{\mathrm{H^0\ sep\ points}}\ar@<4pt>[dd]\ar@<-4pt>@{<--}[dd]_-{\rm H^0\ sep\ points} & & & &(C^\ast, \Pmai, H^0)\\
			\\
			(C, \Pmci, H^0)\ar@{<->}[uurrrr]
		}
	\end{gathered}
\end{equation}

Theorems \ref{inducedfromcomcoalg} and \ref{h0seppontops-pcc-pmci-pmai} show relations between partial comodule coalgebra, partial module coalgebra and partial module algebra, whenever we start from a partial coaction $\lambda'$. In general, we can not start from an action and induce a coaction. To do this we require a more strong hypothesis on $H$, more precisely, we assume that $H$ is finite dimensional.

In fact, if $H$ is finite dimensional, then $H^0 = H^\ast$ (and so $H^0$ separate points). Moreover, given a linear map $\Pmc\colon C \otimes H^\ast \to C$  and a dual basis $\{h_i, h_i^\ast\}_{i=1}^n$ for $H$ and $H^\ast$, then one can induce a linear map $\Pcci\colon C \to H \otimes C$, by
\[
	\Pcci(c) = \sum\limits_{i=1}^n h_i \otimes c \Pmc h_i^\ast
\]
and it is clear that
\[	
	f(c\Up)c\Zp = c \Pmc f,
\]
for all $c \in C$ and $f \in H^\ast$.

Thus, for a finite dimensional Hopf algebra $H$, we can induce a coaction of $H$ on a coalgebra $C$ from a given action of $H^{\ast}$ on $C$.

In \cite{AB1}, there is a similar construction for (right) partial $H$-comodule algebras and (left) partial $H^\ast$-module algebras. Hence we have that these four partial structures are close related, in the following sense:

\begin{teo}\label{p(co)m(co)a-equiv}
	Let $C$ be a coalgebra and suppose that the Hopf algebra $H$ is finite dimensional. Then the following statements are equivalent:
	\begin{enumerate}\Not{}
		\item $C$ is a left partial $H$-comodule coalgebra;
		\item $C^\ast$ is a right partial $H$-comodule algebra;
		\item $C$ is a right partial $H^\ast$-module coalgebra;
		\item $C^\ast$ is a left partial $H^\ast$-module algebra.
	\end{enumerate}
	The relations between the correspondent actions and coactions are given in the following way, for any $c\in C,\ \alpha\in C^\ast$ and $f\in H^\ast$:
	\begin{align}
		\alpha (c\Up) c\Zp	& =  \alpha^{\overline{0}}(c) \alpha^{+\overline{1}}\label{pcc<=>pca} \\
		(f \Pma \alpha)(c)	& =  \alpha(c \Pmc f)\\
		c \Pmc f			& =  f(c\Up) c\Zp \\
		f \Pma \alpha		& =  \alpha^{\overline{0}} f(\alpha^{+\overline{1}}),
	\end{align}
	where $\lambda'\colon c \mapsto c\Up \otimes c\Zp$ and $\rho'\colon\alpha \mapsto \alpha^{\overline{0}} \otimes \alpha^{+\overline{1}}$ are the partial coactions on $C$ and $C^\ast$, respectively.\qed
\end{teo}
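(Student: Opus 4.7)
The plan is to reduce the four-way equivalence to results already proved, using the fact that finite dimensionality of $H$ makes $H^0=H^\ast$ and, in particular, $H^0$ separates points of $H$. With this observation in hand, the equivalences $(1)\Leftrightarrow(3)\Leftrightarrow(4)$ together with the correspondence formulas for $\Pmci$ and $\Pmai$ are given at once by \autoref{inducedfromcomcoalg} and \autoref{h0seppontops-pcc-pmci-pmai}. So only the equivalence of $(2)$ with (any of) the other items needs an additional argument.

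For the equivalence $(2)\Leftrightarrow(4)$, I would invoke the analogous finite-dimensional correspondence between partial comodule algebras and partial module algebras established by Alves and Batista in \cite{AB1}: for a finite dimensional Hopf algebra $H$, a vector space $A$ is a right partial $H$-comodule algebra if and only if $A$ is a left partial $H^\ast$-module algebra, with the two structures related by the dual basis formulas. Applying that result with $A=C^\ast$ immediately gives $(2)\Leftrightarrow(4)$ together with the formula $f\Pma\alpha = \alpha^{+\overline{0}}\,f(\alpha^{+\overline{1}})$.

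The remaining task is to verify that all three pairwise correspondences are compatible, i.e.\ that the four structures fit into a single commutative picture governed by the equations displayed in the statement. Fix a dual basis $\{h_i,h_i^\ast\}_{i=1}^n$ of $H$. From $c\Pmc f = f(c\Up)c\Zp$ and the identity $x=\sum_i h_i^\ast(x)h_i$ one recovers $\lambda'(c)=\sum_i h_i\otimes(c\Pmc h_i^\ast)$, so $\lambda'$ and $\Pmc$ determine each other. Dualising gives the relation $(f\Pma\alpha)(c)=\alpha(c\Pmc f)$ (the identity already used in the proof of \autoref{h0seppontops-pcc-pmci-pmai}), and then, evaluating $\alpha(c\Up)c\Zp = \sum_i \alpha(c\Pmc h_i^\ast)\,h_i = \sum_i (h_i^\ast\Pma\alpha)(c)\,h_i$ and comparing with $\rho'(\alpha)=\alpha^{+\overline{0}}\otimes\alpha^{+\overline{1}}$, one obtains the compatibility $\alpha(c\Up)c\Zp=\alpha^{+\overline{0}}(c)\alpha^{+\overline{1}}$. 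The last formula $f\Pma\alpha = \alpha^{+\overline{0}}\,f(\alpha^{+\overline{1}})$ then follows by evaluating at any $c\in C$ and using the preceding identities.

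The main obstacle is purely bookkeeping: one must keep the handedness conventions (left/right modules vs.\ left/right comodules, and evaluation on the first vs.\ the second tensor factor) consistent across the four structures. Once a convention is fixed and the dual basis lemma is applied carefully, all the stated formulas reduce to instances of $\sum_i f(h_i)h_i^\ast = f$ and $\sum_i h_i^\ast(h)h_i = h$, so no new axiom needs to be checked beyond what is already covered by the two theorems invoked and by \cite{AB1}.
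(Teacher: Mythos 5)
Your proposal is correct and follows essentially the same route as the paper: the paper also derives $(1)\Leftrightarrow(3)\Leftrightarrow(4)$ from \autoref{inducedfromcomcoalg} and \autoref{h0seppontops-pcc-pmci-pmai} after observing that finite dimensionality gives $H^0=H^\ast$ (hence separation of points), uses the dual basis map $c\mapsto\sum_i h_i\otimes(c\Pmc h_i^\ast)$ to invert the action/coaction correspondence, and appeals to the analogous construction in \cite{AB1} for the partial comodule algebra structure on $C^\ast$.
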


Using Theorem \ref{p(co)m(co)a-equiv}, we can extend the Diagram~\eqref{diag-pcc->mpc+pma} to the following commutative diagram, under the hypothesis that the Hopf algebra is finite dimensional:

\begin{equation}\label{diag-partial-str}
	\begin{gathered}
		\xymatrix{
			(C, \lambda', H)\ar@{<->}[rrrr]\ar@{<->}[dd] & & & &(C^\ast, \Pmai, H^0)\\
			\\
			(C, \Pmci, H^0) & & & & (C^\ast, \rho', H)\ar@{<->}[uu]\ar@{<->}[llll]
		}
	\end{gathered}
\end{equation}

\subsection{Globalization for partial comodule coalgebras}

In this section, our goal is to introduce the concept of globalization for partial comodules coalgebras.

Let $D$ be a left $H$-comodule coalgebra via $\lambda\colon d \mapsto d^{-1} \otimes d^{0} \in H\otimes D$ and $C$ a subcoalgebra of $D$. In order to induce a coaction on $C$ we can restrict the coaction $\lambda$ to $C$, but in general $\lambda(C)\not\subseteq H \otimes C$. However, if there is a linear map \(\pi\colon D \to C\) we  consider the composite map
\begin{align}
	\lambda'\!\colon C & \longto  H \otimes C \notag\\
	c & \longmapsto  c^{-1} \otimes \pi(c^{0}).\label{def-induced-pcc}
\end{align}

The following result gives us conditions on the map $\pi$ for the above map becomes a partial coaction on $C$.

\begin{prop}[Induced partial comodule coalgebra]\label{ipcc}
	Let $(D, \lambda)$ be a left $H$-comodule coalgebra, $C$ a subcoalgebra of $D$ and $\pi\colon D\to C$ a comultiplicative projection such that
	\begin{equation}
		(I \otimes I \otimes \pi)(I \otimes \lambda\pi) \lambda(c) = (I \otimes I \otimes \pi)(I \otimes \lambda \otimes \Ve \pi)(I \otimes \tau \Delta) \lambda (c), \label{induced-pcc}
	\end{equation}
	for any $c \in C$.
	Then $C$ is a left partial $H$-comodule coalgebra, with structure given by Equation \eqref{def-induced-pcc}.
\end{prop}

\begin{proof}
	First of all, since $\pi$ is a projection from $D$ onto $C$, then $\lambda'$ satisfies the condition (\ref{pcc-1}). In fact, given $c\in C$, we have
	\begin{alignat*}4
		(\Ve\otimes I)\lambda'(c)
			& \EREf \Ve(c\U)\pi(c\Z)\\
			& \EREf \pi(\Ve(c\U)c\Z)\\
			& \REf{cc-1}  \pi(c)\\
			& \EREf c,
	\end{alignat*}
	where the last equality holds since $\pi$ is a projection (and so $\pi(c) = c$, for all $c\in C$).
	
	Now, since $\pi$ is a comultiplicative map (i.e. $\Delta\circ\pi = (\pi\otimes\pi)\circ\Delta$) then it follows that $\lambda'$ satisfies the condition (\ref{pcc-2}). In fact, let $c\in C$, so
	\begin{alignat*}4
		(I \otimes \Delta_C)\lambda' (c)
			& \EREf c^{-1} \otimes \Delta(\pi(c^{0}))\\
			& \EREf c^{-1} \otimes (\pi\otimes\pi)(\Delta(c^{0}))\\
			& \EREf c^{-1} \otimes \pi(c^{0}{}_1)\otimes\pi(c^{0}{}_2)\\
			& \REf{cc-2} c_1{}^{-1}c_2{}^{-1} \otimes \pi(c_1{}^{0})\otimes\pi(c_2{}^{0})\\
			& \EREf (m_H \otimes I \otimes I ) (I \otimes \tau_{C,H} \otimes I)(\lambda' \otimes \lambda')\Delta_C (c).
	\end{alignat*}
	
	Finally, since $\pi$ satisfies Equation \eqref{induced-pcc} we have that, for all $c\in C$
	\begin{alignat*}4
		(I \otimes \lambda')\lambda' (c)
			& \EREf c\Up \otimes c\Zp\Up \otimes c\Zp\Zp\\
			& \EREf c\U \otimes \pi(c\Z)\U \otimes \pi(\pi(c\Z)\Z)\\
			& \REf{induced-pcc}  c\U \otimes c\Z_2\U \otimes \Ve(\pi(c\Z_1)) \pi(c\Z_2\Z)\\
			& \REf{cc-2}  c_1\U c_2\U \otimes c_2\Z\U \otimes \Ve(\pi(c_1\Z)) \pi(c_2\Z\Z)\\
			& \REf{cc-3}  c_1\U \Ve(\pi(c_1\Z)) c_2\U_1 \otimes c_2\U_2 \otimes \pi(c_2\Z)\\
			& \EREf c_1\Up \Ve(c_1\Zp) c_2\Up_1 \otimes c_2\Up_2 \otimes c_2\Zp\\
			& \EREf (m_H \otimes I \otimes I)\{ \nabla \otimes [(\Delta_H \otimes I)\lambda']\}\Delta_C (c).
	\end{alignat*}
	Therefore, $C$ is a left partial $H$-comodule coalgebra.
\end{proof}

\begin{obs}
	One can note that the converse of Proposition \ref{ipcc} is also true. Indeed, supposing $C$ a subcoalgebra of a comodule coalgebra $D$ and $\pi \colon D \to C$ a comultiplicative projection such that $C$ is a partial comodule coalgebra by $\lambda'(c) = c^{-1} \otimes \pi(c^0)$, hence Equation \eqref{induced-pcc} holds.
	
	The proof of the above statement follows straight from the calculations made in Proposition \ref{ipcc}.
\end{obs}

We are now able to define a globalization for a partial comodule coalgebra, as follows.

\begin{dfn}[Globalization for partial comodule coalgebra]\label{gcc}
	Let $C$ be a left partial $H$-comodule coalgebra. A \emph{globalization for $C$} is a triple $(D, \theta, \pi)$, where $D$ is an $H$-comodule coalgebra, $\theta$ is a coalgebra monomorphism from $C$ into $D$ and $\pi$ is a comultiplicative projection from $D$ onto $\theta(C)$, such that the following conditions hold:
	\begin{enumerate}\Not{GCC}
		\item  $x\U \otimes \pi(x\Z)\U \otimes \pi(\pi(x\Z)\Z) =\\ \hspace*{3cm} = x\U \otimes x\Z_2\U \otimes \Ve(\pi(x\Z_1)) \pi(x\Z_2\Z),$ \\ for all $x\in \theta(C)$;\label{gcc-1}
		\item  $\theta$ is an equivalence of partial $H$-comodule coalgebra;\label{gcc-2}
		\item  $D$ is the $H$-comodule coalgebra generated by $\theta(C)$.\label{gcc-3}
	\end{enumerate}
\end{dfn}

\begin{obs}
	The first item in Definition \ref{gcc} tells us that it is possible to define the induced partial comodule coalgebra on $\theta(C)$.
	
	The second one tells us that this induced partial coaction coincides with the original, and this fact is translated in the commutative diagram bellow:
	\begin{equation}\label{diag-coacao-equiv}
		\begin{gathered}
			\xymatrix{
				C\ar[rr]^-{\lambda'}\ar[dd]_-\theta & & H \otimes C\ar[dd]^-{I\otimes\theta}\\
				\\
				\theta(C)\ar[rr]^-{\lambda_\pi} & & H \otimes \theta(C)\ar@{}|-\circlearrowright[uull]
			}
		\end{gathered}
	\end{equation}
	Moreover, the second condition can be seen as
	\begin{equation}
		\theta(c)\U \otimes \pi(\theta(c)\Z) = c\Up \otimes \theta(c\Zp)\label{eq-coacao-equiv},
	\end{equation} for all $c\in C$.
	
	Finally, the last condition of Definition \ref{gcc} tells us that there is no proper subcomodule coalgebra of $D$ containing $\theta(C)$.
\end{obs}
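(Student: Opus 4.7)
The plan is to verify the three interpretations stated in the remark, each amounting to an unfolding of the corresponding clause in \autoref{gcc} rather than a genuine calculation.

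First I would address the reading of condition (\ref{gcc-1}). The equation displayed there, namely $\pi(d)\U \otimes \pi(\pi(d)\Z) = d_2\U \otimes \Ve(\pi(d_1))\pi(d_2\Z)$, is precisely the hypothesis \eqref{induced-pcc} required by \autoref{ipcc} to produce a partial $H$-comodule coalgebra structure on the subcoalgebra $\theta(C)\subseteq D$ via the comultiplicative projection $\pi$. Applying \autoref{ipcc} directly yields an induced partial coaction $\lambda_\pi\colon \theta(C)\to H\otimes\theta(C)$ given by $\lambda_\pi(\theta(c)) = \theta(c)\U \otimes \pi(\theta(c)\Z)$, which is exactly the first assertion of the remark.

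Next I would turn to condition (\ref{gcc-2}). Saying that $\theta$ is an equivalence of partial $H$-comodule coalgebras means that the coalgebra monomorphism $\theta$ intertwines $\lambda'$ (on $C$) with $\lambda_\pi$ (on $\theta(C)$), i.e.\ $(I\otimes\theta)\circ\lambda' = \lambda_\pi\circ\theta$. Evaluating both sides at a generic $c\in C$ and using the explicit formula for $\lambda_\pi$ obtained in the previous step, the left-hand side reads $c\Up\otimes\theta(c\Zp)$ while the right-hand side reads $\theta(c)\U\otimes\pi(\theta(c)\Z)$. Their coincidence is precisely \autoref{eq-coacao-equiv}, and the diagram \autoref{diag-coacao-equiv} is nothing but the pictorial rendering of this same intertwining identity. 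Conversely, if \autoref{eq-coacao-equiv} holds for every $c\in C$, running this chain of equalities backwards gives $(I\otimes\theta)\circ\lambda' = \lambda_\pi\circ\theta$, so the two formulations are manifestly equivalent.

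Finally, condition (\ref{gcc-3}) literally says $D$ is the $H$-comodule coalgebra generated by $\theta(C)$, meaning the smallest $H$-subcomodule coalgebra of $D$ that contains $\theta(C)$ is $D$ itself; equivalently, no proper $H$-subcomodule coalgebra of $D$ contains $\theta(C)$, which is the last sentence of the remark. No step in this plan presents a real obstacle, since each clause is a direct translation of the corresponding condition in \autoref{gcc}; the only bookkeeping point that deserves care is in the second item, where one must distinguish the Sweedler notation $\U,\Z$ for the global coaction on $D$ from the barred notation $\Up,\Zp$ for the partial coaction on $C$, so that the intertwining equation emerges in the correct form \eqref{eq-coacao-equiv}.
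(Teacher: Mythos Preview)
Your proposal is correct and matches the paper's treatment: the remark in the paper carries no separate proof, being simply an unpacking of the three clauses of \autoref{gcc}, and your write-up does exactly that—invoking \autoref{ipcc} for (\ref{gcc-1}), spelling out the intertwining identity $(I\otimes\theta)\circ\lambda'=\lambda_\pi\circ\theta$ for (\ref{gcc-2}) to obtain \eqref{eq-coacao-equiv}, and reading (\ref{gcc-3}) as a minimality statement.
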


\subsubsection{Correspondence between globalizations}

Given a left partial $H$-comodule coalgebra $C$ we can induce a structure of right partial $H^0$-module coalgebra on $C$ (see Theorem \ref{inducedfromcomcoalg}). It is also true that a (global) $H$-comodule coalgebra induces a (global) $H^0$-module coalgebra. Therefore, given a globalization $(D, \theta, \pi)$ for $C$, one can ask: \emph{Is there some relation between $D$ and $C$ when viewed as  $H^0$-module coalgebras (global and partial, respectively)?} Here we study a little bit more these structures in order to answer this question. The notations previously used are kept.

Let $C$ be a left partial $H$-comodule coalgebra and suppose that $(D, \theta, \pi)$ is a globalization for $C$. From Theorem \ref{inducedfromcomcoalg}, we have that $C$ is a right partial $H^0$-module coalgebra with partial action given by 
\[
	c \Pmc f = f(c\Up) c\Zp,
\]
for all $c\in C$ and $f\in H^0$. Clearly, the same is true for $D$, i.e., we have a structure of $H^0$-module coalgebra on $D$ given by
\[
	d \Mc f = f(d\U) d\Z,
\]
for all $d\in D$ and $f\in H^0$.

\begin{teo}\label{gcc+h0sp=>gmc}
	Let $C$ be a left partial $H$-comodule coalgebra and suppose that $(D, \theta, \pi)$ is a globalization for $C$. If $H^0$ separate points, then $(D, \theta, \pi)$ is also a globalization for $C$, as right partial $H^0$-module coalgebra.
\end{teo}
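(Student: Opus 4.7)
The plan is to verify the three conditions of \autoref{gmc}, with $H$ there replaced by $H^0$. Conditions (\ref{gmc-1}) and (\ref{gmc-2}) have in fact already been checked in the paragraphs immediately preceding the theorem: the computation given there shows $\pi(\pi(d)\Mc f) = \pi(\Ve(\pi(d_1)) d_2 \Mc f)$ for every $d\in D$ and $f \in H^0$, which is (\ref{gmc-1}); and the subsequent calculation yields $\theta(c)\Ipmc f = \theta(c \Pmc f)$, which is (\ref{gmc-2}). Thus the only remaining task is to establish (\ref{gmc-3}), namely $D = \theta(C) \Mc H^0$.

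Set $M := \mathrm{span}\{\theta(c) \Mc f : c \in C,\ f \in H^0\}$. The crucial tool I would establish first is a dual basis lemma: \emph{since $H^0$ separates points, for any linearly independent $h_1, \dots, h_n \in H$ there exist $f_1, \dots, f_n \in H^0$ with $f_i(h_j) = \delta_{ij}$.} This follows because the evaluation map $\Phi\colon H^0 \to \Bbbk^n$, $f \mapsto (f(h_1), \dots, f(h_n))$, must be surjective: otherwise some non-trivial combination $\sum_i \alpha_i h_i$ would be annihilated by every element of $H^0$, contradicting the separation hypothesis together with the linear independence of the $h_i$.

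With the lemma in hand, I would then show that $M$ is an $H$-subcomodule and a subcoalgebra of $D$ containing $\theta(C)$. The inclusion $\theta(C) \subseteq M$ is immediate, since $\theta(c) = \theta(c) \Mc \Ve_H$ by axiom (\ref{cc-1}) and $\Ve_H \in H^0$. For the subcomodule property, take $m = \theta(c) \Mc f \in M$ and decompose $\lambda(\theta(c)) = \sum_i h_i \otimes d_i$ with the $h_i$ linearly independent in $H$; the dual basis lemma provides $f_i \in H^0$ with $\theta(c) \Mc f_i = d_i$, so each $d_i$ lies in $M$. A short application of coassociativity then gives $\lambda(m) = \sum_i f(h_{i,1}) h_{i,2} \otimes d_i \in H \otimes M$. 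The subcoalgebra property $\Delta(M) \subseteq M \otimes M$ follows from axiom (\ref{cc-2}) combined with the fact that $\Delta_{H^0}(f) \in H^0 \otimes H^0$ for $f\in H^0$. Since $D$ is minimal as the $H$-comodule coalgebra generated by $\theta(C)$ by condition (\ref{gcc-3}), we conclude $D \subseteq M$, whence $D = M$.

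The anticipated obstacle is precisely the subcomodule step: to stabilize $M$ under $\lambda$ one must recover the individual tensor factors $d_i$ of $\lambda(\theta(c))$ as elements of $M$, and this is exactly where the separation hypothesis on $H^0$ is indispensable.
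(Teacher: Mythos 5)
Your proposal is correct, and its overall strategy coincides with the paper's: both reduce the problem to condition (\ref{gmc-3}) (the first two conditions being exactly the computations carried out in the paragraphs preceding the theorem) and then establish (\ref{gmc-3}) by showing that the $H^0$-module coalgebra generated by $\theta(C)$ is in fact an $H$-subcomodule coalgebra of $D$, so that (\ref{gcc-3}) forces it to equal $D$. The one genuine difference lies in how the tensor components of the coaction are recovered inside $M$. The paper takes an arbitrary $H^0$-submodule coalgebra $M\supseteq\theta(C)$, writes $\lambda(m)=\sum_i h_i\otimes m_i$ for arbitrary $m\in M$ with $\{h_i\}$ part of a basis of $H$, and invokes the Jacobson density theorem to produce elements of $H^0$ that act on $m$ like the dual functionals $h_i^\ast\in H^\ast$, giving $m_i=m\Mc h_i^\ast\in M$. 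You instead prove an explicit dual-basis lemma (surjectivity of the evaluation map $H^0\to\Bbbk^n$, since a functional vanishing on its image would yield a nonzero $\sum_i\alpha_i h_i$ annihilated by all of $H^0$) and apply it to $\lambda(\theta(c))$, then recover $\lambda(\theta(c)\Mc f)$ via coassociativity (\ref{cc-3}). Your route is more elementary and self-contained --- the separation hypothesis enters through a transparent linear-algebra fact rather than through a density theorem whose applicability the paper leaves implicit --- at the mild cost of arguing on the generators $\theta(c)\Mc f$ of the span rather than on arbitrary elements of an arbitrary $M$; since the span formulation and the minimality formulation of (\ref{gmc-3}) are equivalent, nothing is lost.
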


\newcommand{\Tc}{\theta(c)}

\begin{proof}
	Since $\theta$ is a coalgebra monomorphism from $C$ into $D$ and $\pi$ is a comultiplicative projection from $D$ onto $\theta(C)$, in order to induce a structure of partial $H^0$-module coalgebra on $\theta(C)$ we just need to check that the Equation \eqref{eq:pmc} holds (see Proposition \ref{ipmc}). For this, let $x =  \Tc \Mc g \in \theta(C) \Mc H^0$ and $f \in H^0$, so
	\begin{alignat*}4
		\pi(\pi(x)\Mc f)
			& \EREf \pi(\pi(\Tc\Z g(\Tc\U)) \Mc f) \\
			& \EREf g(\Tc\U) \pi(\pi(\Tc\Z)\Z f(\pi(\Tc\Z)\U)) \\
			& \EREf g(\Tc\U) f(\pi(\Tc\Z)\U) \pi(\pi(\Tc\Z)\Z) \\
			& \REf{induced-pcc} g(\Tc\U) f(\Tc\Z_2\U) \pi(\Tc\Z_2\Z) \Ve\pi(\Tc\Z_1) \\
			& \EREf f((\Tc \Mc g)_2\U) \pi((\Tc \Mc g)_2\Z) \Ve\pi((\Tc \Mc g)_1)\\
			& \EREf f(x_2\U) \pi(x_2\Z) \Ve\pi(x_1)\\
			& \EREf \pi(x_2 \Mc f) \Ve\pi(x_1)\\
			& \EREf \pi(\Ve(\pi(x_1))x_2 \Mc f).
	\end{alignat*}
	Thus, $\theta(C)$ has a structure of partial $H^0$-module coalgebra induced from the structure of module coalgebra of $D$.
	
	Now we show that $\theta$ is a morphism of partial actions. In fact, let $c\in C$, so
	\begin{alignat*}4
		\theta(c) \Pmc f
			& \EREf \pi(\theta(c) \Mc f)\\
			& \EREf	 f(\theta(c)\U)\pi(\theta(c)\Z)\\
			& \REf{eq-coacao-equiv}  f(c\Up)\theta(c\Zp)\\
			& \EREf \theta(f(c\Up)c\Zp)\\
			& \EREf \theta(c\Pmc f).
	\end{alignat*}
	
	Therefore, we just need to show that the (\ref{gmc-3}) in Definition \ref{gmc} holds.
	
	Let $M$ be any $H^0$-submodule coalgebra of $D$ containing $\theta(C)$. We need to show that $M = D$, and for this it is enough to show that $M$ is an $H$-subcomodule coalgebra of $D$.
	
	Take $f\in H^0$, $m\in M$, and consider $\{h_i\}$ a basis of $H$. Let $\{h_i^\ast\}$ be the set contained in $H^\ast$ whose elements are all the dual maps of the $h_i$'s. Then write $\lambda(m) \in H\otimes D$ in terms of the basis of $H$, i.e.,
	\[
		\lambda(m) = \sum\limits_{i=0}^n h_i \otimes m_i,
	\]
	where the $m_i$'s are non-zero elements, at least, in $D$.
	
	Since $D$ is an $H$-comodule, so it is an $H^\ast$-module via the same action of $H^0$. Moreover, the action of $H^0$ on $D$ is a restriction of the action of $H^\ast$. Since $H^0$ separate points, it follows by Jacobson Density Theorem that if $m\in M$ then there exists $\left\{h_{(m)i}^0\in H^0\right\}$ such that $m \Mc h_{(m)i}^0 = m \Mc h_i^\ast$, for each $i$. Thus we have that
	\[
		m \Mc h_{(m)j}^0 = m \Mc h_j^\ast = \sum\limits_{i=0}^n h_j^\ast(h_i) m_i = m_j
	\]
	and so each $m_i$ lies in $M$. Then $M$ is an $H$-subcomodule of $D$, so $M$ is an $H$-subcomodule coalgebra of $D$ containing $\theta(C)$, which implies $M=D$, and the proof is complete.
\end{proof}

\subsubsection{Constructing a globalization}

Now we construct a globalization for a left partial comodule coalgebra $C$ in a special situation. First of all, remember that if $M$ is a right $H^0$-module and $H^0$ separate points, then we have a linear map
\begin{align*}
	\varphi\colon M & \longrightarrow \Hom(H^0, M)\\
	m & \longmapsto \varphi(m)(f) = m \cdot f
\end{align*}
and an injective linear map 
\begin{align*}
	\gamma\colon H\otimes M &\longrightarrow \Hom(H^0, M)\\
	h \otimes m & \longmapsto \gamma(h \otimes m)(f) = f(h) m.
\end{align*}
In the above situation, we say that $M$ is a \emph{rational $H^0$-module} if $\varphi(M) \subseteq \gamma(H \otimes M)$ (cf.~\cite[Definition~2.2.2]{DNR}).

This definition can be seen in the following commutative diagram:
\begin{equation}
	\begin{gathered}
		\xymatrix{
			& \Hom(H^0, M) & \\
			\\
			M\ar[ruu]^-\varphi \ar@{-->}[rr]^-{\lambda} & & H \otimes M\ar@{_{(}->}[uul]_-\gamma
		}
	\end{gathered}
\end{equation}

Notice that, given a rational $H^0$-module $M$ we have on it a structure of $H$-comodule via $\lambda\colon M \to H \otimes M$ satisfying, for any $m\in M$
\begin{equation}\label{eq-h0rat}
	\lambda(m) = \sum h_i \otimes m_i \iff m \cdot f = \sum f(h_i) m_i,\ \text{for all}\,\ f\in H^0.
\end{equation}

From Theorem \ref{gcc+h0sp=>gmc}, it follows that exists a natural way to look for a globalization for a partial coaction of $H$ on $C$, i.e., to get a globalization for $C$ we should see it as a partial module coalgebra and then consider its standard globalization as module, under the hypothesis that $H^0$ separate points.

Thus, given a left partial $H$-comodule coalgebra $(C, \lambda')$ we have from Theorem \ref{teo-gmc} that $(C\otimes H^0, \theta, \pi)$ is a globalization for $C$ as right partial $H^0$-module coalgebra.

We desire $C \otimes H^0$ to be a globalization for $C$ as partial comodule coalgebra, but, in general, it is not even an $H$-comodule coalgebra.

In order to overcome this problem we will suppose that $C \otimes H^0$ is a rational $H^0$-module and, therefore, we have that $C \otimes H^0$ is an $H$-comodule with coaction satisfying Equation \eqref{eq-h0rat}, i.e., the following holds
\begin{equation}\label{eq-CxH0-h0rat}
	\lambda(c \otimes f) = \sum h_i \otimes c _i \otimes f_i \iff c \otimes (f \ast g) = \sum g(h_i) c _i \otimes f_i,
\end{equation}
for any $c\otimes f \in C \otimes H^0$ and $g\in H^0$

Therefore, by Theorem \ref{h0seppontops-pcc-pmci-pmai}, $C \otimes H^0$ is an $H$-comodule coalgebra. Now we are in position to show that $C \otimes H^0$ is a globalization for $C$ as partial comodule colagebra, as follows.

\begin{teo}\label{teo-gcc}
	Let $C$ be a left partial $H$-comodule coalgebra. With the above notations, if $C \otimes H^0$ is a rational $H^0$-module and $H^0$ separate points, then $(C \otimes H^0, \theta, \pi)$ is a globalization for $C$.
\end{teo}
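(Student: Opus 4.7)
The plan is to leverage the fact, already established via \autoref{teo-gmc}, that $(C\otimes H^0,\theta,\pi)$ is a globalization for $C$ viewed as a right partial $H^0$-module coalgebra, and then to translate each of the three axioms (\ref{gcc-1})--(\ref{gcc-3}) of a coaction-theoretic globalization into the corresponding action-theoretic axiom using the rationality dictionary (\ref{eq-CxH0-h0rat}). Concretely, the strategy is: promote $C\otimes H^0$ from a global $H^0$-module coalgebra to a global $H$-comodule coalgebra via rationality, and then check (\ref{gcc-1}) and (\ref{gcc-2}) by pairing both sides with arbitrary $f\in H^0$ in the $H$-slot and invoking that $H^0$ separates points to reduce them to (\ref{gmc-1}) and (\ref{gmc-2}); (\ref{gcc-3}) will follow by essentially the argument already used in \autoref{gcc+h0sp=>gmc}.

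First I would verify that $C\otimes H^0$ is not merely an $H$-comodule but actually an $H$-comodule coalgebra. By hypothesis it is a rational $H^0$-module, so it carries a coaction $\lambda\colon C\otimes H^0\to H\otimes (C\otimes H^0)$ characterised by $d\Mc f=f(d\U)d\Z$. Axiom (\ref{cc-1}) is immediate, and axioms (\ref{cc-2}), (\ref{cc-3}) need to be established; the technique is to pair both sides of each identity with arbitrary $f,g\in H^0$ on the $H$-slots and use the rationality rule to translate into statements about $\Mc$, which then reduce to the (\ref{mc-2})--(\ref{mc-3}) axioms already enjoyed by the global $H^0$-module coalgebra $C\otimes H^0$. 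Because $H^0$ separates points, coincidence after such pairings is equivalent to coincidence in $H\otimes (C\otimes H^0)$. This step is the coaction-theoretic analogue of the implication (\ref{pmcih0sepponntos})$\Rightarrow$(\ref{pcch0sepponntos}) of \autoref{h0seppontops-pcc-pmci-pmai}.

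With $C\otimes H^0$ recognised as an $H$-comodule coalgebra, I would apply the same separation trick to (\ref{gcc-1}) and (\ref{gcc-2}). Pairing the left-hand side of (\ref{gcc-1}) with $f\otimes I$ yields $f(\pi(d)\U)\pi(\pi(d)\Z)=\pi(\pi(d)\Mc f)$, while pairing the right-hand side yields $\Ve(\pi(d_1))\pi(d_2\Mc f)=\pi(\Ve(\pi(d_1))d_2\Mc f)$, and these are equal by (\ref{gmc-1}) for the $H^0$-globalization. Likewise, pairing both sides of the identity demanded by (\ref{gcc-2}) (compare (\ref{eq-coacao-equiv})) with $f\in H^0$ transforms it into $\pi(\theta(c)\Mc f)=\theta(f(c\Up)c\Zp)$, i.e., $\theta(c)\Pmc f=\theta(c\Pmc f)$, which is precisely (\ref{gmc-2}). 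Since $H^0$ separates points, in each case the original equality in $H\otimes\theta(C)$ follows.

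Finally, (\ref{gcc-3}) is handled by adapting the argument of \autoref{gcc+h0sp=>gmc}: any $H$-subcomodule coalgebra $N\subseteq C\otimes H^0$ containing $\theta(C)$ is automatically closed under the $H^0$-action, because $n\Mc f=f(n\U)n\Z$ lies in $N$ whenever $\lambda(n)\in H\otimes N$; hence $N$ contains the $H^0$-submodule coalgebra generated by $\theta(C)$, which by (\ref{gmc-3}) of the $H^0$-globalization is all of $C\otimes H^0$. The step I expect to be the most delicate is the very first one: one must extract from pure rationality plus the global $H^0$-module coalgebra structure the full coalgebra-compatibility of the induced $H$-coaction, without being able to directly dualize the partial result \autoref{h0seppontops-pcc-pmci-pmai}. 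Once this transfer is justified, the remaining verifications are essentially dictionary translations of results already proved in the paper.
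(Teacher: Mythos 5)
Your proposal is correct and follows essentially the same route as the paper: upgrade $C\otimes H^0$ to an $H$-comodule coalgebra via rationality (the paper likewise handles this by analogy with \autoref{h0seppontops-pcc-pmci-pmai}), verify (\ref{gcc-1}) and (\ref{gcc-2}) by pairing with an arbitrary $g\in H^0$ so that they reduce to (\ref{gmc-1}) and (\ref{gmc-2}) of the standard $H^0$-module-coalgebra globalization, and deduce (\ref{gcc-3}) by observing that any $H$-subcomodule coalgebra containing $\theta(C)$ is an $H^0$-submodule coalgebra and then invoking (\ref{gmc-3}). The only cosmetic difference is that for the last step the paper cites \autoref{inducedfromcomcoalg} rather than \autoref{gcc+h0sp=>gmc}, but the argument you actually describe is the same one.
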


\begin{proof}
	By the above discussed, $C \otimes H^0$ is an $H$-comodule coalgebra, $\theta\colon C \to C\otimes H^0$ is a coalgebra map and ${\pi\colon C \otimes H^0 \twoheadrightarrow \theta(C)}$ is a comultiplicative projection. 
	
	Now, we can show directly that the conditions (\ref{gcc-1})-(\ref{gcc-3}) hold. Let $x \in C \otimes H^0$, and $f,g\in H^0$, so
	
	\begin{alignat*}4
		\mathrlap{\hspace*{-2cm}(f \otimes g \otimes I)[x^{-1} \otimes \pi(x^{0})\U \otimes \pi(\pi(x^{0})\Z)] = } \\
		\hspace*{2cm} & \EREf f(x^{-1}) g(\pi(x^{0})\U) \pi(\pi(x^{0})\Z) \\
			& \EREf f(x^{-1}) \pi(g(\pi(x^{0})\U) \pi(x^{0})\Z)  \\ 
			& \EREf f(x^{-1})  \pi[\pi(x^{0}) \Mc g] \\
			& \EREf \pi[\pi(x^{0} f(x^{-1})) \Mc g] \\
			& \EREf \pi[\pi(x \Mc f) \Mc g] \\
			& \REf{gmc-2} \pi[\Ve \pi((x \Mc f)_1) (x \Mc f)_2 \Mc g] \\
			& \EREf \pi[\Ve \pi(x_1 \Mc f_1) (x_2 \Mc f_2) \Mc g]\\
			& \EREf \pi[\Ve \pi(x_1{}^0 f_1(x_1{}^{-1})) (x_2{}^0 f_2(x_2{}^{-1}) \Mc g)]\\
			& \EREf f_1(x_1{}^{-1}) f_2(x_2{}^{-1}) \Ve \pi(x_1{}^0 ) \pi(x_2{}^0  \Mc g)\\
			& \EREf f(x_1{}^{-1} x_2{}^{-1}) \Ve \pi(x_1{}^0 ) \pi(x_2{}^0  \Mc g) \\
			& \REf{pcc-2} f(x^{-1}) \Ve \pi(x^{0}{}_1 ) \pi (x^{0}{}_2  \Mc g)\\
			& \EREf f(x^{-1}) \Ve \pi(x^{0}{}_1 ) \pi (x^{0}{}_2{}^{0}   g(x^{0}{}_2{}^{-1}))\\
			& \EREf f(x^{-1}) g(x^{0}{}_2{}^{-1}) \Ve \pi(x^{0}{}_1 ) \pi (x^{0}{}_2{}^{0})\\
			& \EREf (f \otimes g \otimes I) (x^{-1} \otimes x^{0}{}_2{}^{-1} \otimes \Ve \pi(x^{0}{}_1 ) \pi (x^{0}{}_2{}^{0})) .
	\end{alignat*}
	
	Since $H^0$ separate points, the condition (\ref{gcc-1}) is satisfied.
	
	To prove the condition (\ref{gcc-2}) take $c\in C$ and note that
	\begin{alignat*}4
		(g \otimes I)[\theta(c)\U \otimes \pi(\theta(c)\Z)]
			& \EREf g(\theta(c)\U) \pi(\theta(c)\Z)\\
			& \EREf \pi(g(\theta(c)\U) \theta(c)\Z)\\
			& \EREf \pi(\theta(c) \Mc g)\\
			& \EREf \theta(c \Pmc g)\\
			& \EREf g(c\Up) \theta(c\Zp)\\
			& \EREf (g \otimes I)[c\Up \otimes \theta(c\Zp)].
	\end{alignat*}
	Since $H^0$ separate points, thus $\theta$ is an equivalence of partial coactions.
	
	Finally, to show that $C \otimes H^0$ is generated by $\theta(C)$, consider a subcomodule coalgebra $M$ of $C \otimes H^0$ containing $\theta(C)$. By Theorem \ref{inducedfromcomcoalg}, $M$ is an $H^0$-submodule coalgebra of $C \otimes H^0$ containing $\theta(C)$. Thus, it follows from condition (\ref{gmc-3}) that $M = C \otimes H^0$.
	
	Therefore $C \otimes H^0$ is a globalization for $C$ as a partial $H$-comodule coalgebra.
\end{proof}

The globalization above constructed is called the \emph{standard globalization} for a partial comodule coalgebra.

\begin{obs}
	When the Hopf algebra is finite dimensional, $H^0 = H^\ast$ and so it separate points. Furthermore, in this case $C \otimes H^\ast$ is a rational $H^\ast$-module with coaction given by
	\[
		\lambda\colon c\otimes f \longmapsto\sum\limits_{i = 1}^n h_i \otimes c \otimes f\ast h_i^\ast,
	\]
	where $\{ h_i, h_i^\ast \}_{i=1}^n$ is a dual basis for $H$ and $H^\ast\!$.
\end{obs}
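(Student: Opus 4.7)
The plan is to verify the two assertions of the remark in turn, both of which ultimately reduce to the dual basis expansion for finite-dimensional vector spaces.

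First I would handle the identification $H^0=H^{\ast}$ and the separation of points. Recalling the definition, $H^{0}=\{f\in H^{\ast}\mid f(I)=0\text{ for some ideal }I\subseteq H\text{ of finite codimension}\}$. When $\dim_\Bbbk H<\infty$, the zero ideal itself has finite codimension, so every $f\in H^{\ast}$ belongs to $H^{0}$; this gives $H^{0}=H^{\ast}$ immediately. To see that $H^{\ast}$ separates points, I would invoke the dual basis $\{h_i,h_i^{\ast}\}_{i=1}^n$: given a nonzero $h\in H$, expanding $h=\sum_i\alpha_i h_i$ produces at least one index $j$ with $\alpha_j\neq 0$, and then $h_j^{\ast}(h)=\alpha_j\neq 0$.

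Next I would verify the rationality claim for $C\otimes H^{\ast}$. Recall from \autoref{ex-mc-tensor} that the right $H^{\ast}$-action on $C\otimes H^{\ast}$ is $(c\otimes f)\Mc g=c\otimes(f\ast g)$. To show rationality with the proposed $\lambda(c\otimes f)=\sum_{i=1}^n h_i\otimes c\otimes f\ast h_i^{\ast}$, I would apply the equivalence \eqref{eq-h0rat}: it suffices to check that for every $g\in H^{\ast}$,
\[
(c\otimes f)\Mc g \;=\; \sum_{i=1}^n g(h_i)\,(c\otimes f\ast h_i^{\ast}).
\]
The right-hand side equals $c\otimes f\ast\bigl(\sum_i g(h_i)h_i^{\ast}\bigr)$. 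The key identity here is the standard dual basis expansion $g=\sum_i g(h_i)h_i^{\ast}$ in $H^{\ast}$, which gives $\sum_i g(h_i)h_i^{\ast}=g$ and so the right-hand side collapses to $c\otimes(f\ast g)=(c\otimes f)\Mc g$, as required.

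There is essentially no obstacle: the whole statement is a direct consequence of finite-dimensionality via the dual basis. The only point at which I would be careful is to note that the coaction $\lambda$ is well-defined precisely because the sum is finite (a feature of the finite-dimensional setting that would fail in general), and that the equivalence in \eqref{eq-h0rat} characterizing rational modules is applied to the specific $H^{\ast}$-action on $C\otimes H^{\ast}$ from \autoref{ex-mc-tensor}, rather than to some other candidate action.
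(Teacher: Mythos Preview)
Your proposal is correct. The paper states this remark without proof (it is an \emph{obs} following \autoref{teo-gcc}), so there is no argument in the paper to compare against; your verification via the dual basis expansion $g=\sum_i g(h_i)h_i^{\ast}$ is exactly the routine check the authors leave implicit.
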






\begin{thebibliography}{99}
	
	\bibitem{A}{
		F. Abadie,
		\emph{Enveloping actions and Takai duality for partial actions},
		J. Funct. Anal. \textbf{197} (2003),
		{no. 1},
		{14--67}.
	}\\
	
	\bibitem{AB1}{
		{M. Alves},
		{E. Batista},
		\emph{Partial Hopf actions, partial invariants and a Morita context},
		{Algebra Discrete Math.} \textbf{3} {(2009)},
		{1--19}.
	}\\
	
	\bibitem{AB2}{
		{M. Alves},
		{E. Batista},
		\emph{Enveloping actions for partial Hopf actions},
		{Comm. Algebra}	\textbf{38} {(2010)},
		{no. 8},
		{2872--2902}.
	}\\
	
	\bibitem{AB3}
		{M. Alves},
		{E. Batista},
		\emph{Globalization theorems for partial Hopf (co)actions, and some of their applications},
		{Groups, Algebras and Applications}
		{Contemp. Math.} {537},
		{Amer. Math. Soc., Providence, RI},
		{2011},
		{13--30}.
	\\
	
	\bibitem{BV}{
		{E. Batista},
		{J. Vercruysse},
		\emph{Dual Constructions for Partial Actions of Hopf Algebras},
		{Journal of Pure and Applied Algebra} \textbf{220} (2016), 
		no.2
		{518-599}.
	}\\
	
	\bibitem{BW}{
		{T. Brzezinski},
		{R. Wisbauer},
		\emph{Corings and comodules},
		{London Mathematical Society Lecture Note Series} \textbf{309},
		{Cambridge University Press, Cambridge},
		{2003},
		{xii+476}.
	}\\
	
	\bibitem{CJ}{
		{S. Caenepeel},
		{K. Janssen},
		\emph{Partial (co)actions of Hopf algebras and partial Hopf-Galois theory},
		{Comm. Algebra} \textbf{36} {(2008)},
		{no. 8},
		{2923--2946}.
	}\\
	
	\bibitem{CS}{
		{S. Chase},
		{M. Sweedler},
		\emph{Hopf algebras and Galois theory},
		{Lect. Notes in Math.}\textbf{97},
		{(1969)},
	}\\
	
	\bibitem{DNR}{
		{S. D{\u{a}}sc{\u{a}}lescu},
		{C. N{\u{a}}st{\u{a}}sescu},
		{\c{S}. Raianu},
		\emph{Hopf algebras: An introduction},
		{Monographs and Textbooks in Pure and Applied Mathematics} \textbf{235},
		{Marcel Dekker, Inc., New York},
		{2001},
		{x+401}.
	}\\
	
	\bibitem{DE}{
		{M. Dokuchaev},
		{R. Exel},
		\emph{Associativity of crossed products by partial actions, enveloping actions and partial representations},
		{Trans. Amer. Math. Soc.} \textbf{357} {(2005)},
		{no. 5},
		{1931--1952}.
	}\\
	
	\bibitem{DFP}{
		{M. Dokuchaev},
		{M. Ferrero,},
		{A. Paques,},
		\emph{Partial actions and Galois theory},
		{J. Pure Appl. Algebra} \textbf{208} {(2007)},
		{no. 1},
		{77--87}.
	}\\
	
	\bibitem{E}{
		{R. Exel},
		\emph{Circle actions on $C^*$-algebras, partial automorphisms, and a generalized Pimsner-Voiculescu exact sequence},
		{J. Funct. Anal.} \textbf{122} {(1994)},
		{no. 2},
		{361--401}.
	}\\
	
	\bibitem{M}{
		{S. Montgomery},
		\emph{Hopf algebras and their actions on rings},
		{CBMS Regional Conference Series in Mathematics} \textbf{82},
		{Published for the Conference Board of the Mathematical
			Sciences, Washington, DC; by the American Mathematical Society,
			Providence, RI},
		{1993},
		{xiv+238},
	}\\
	
	\bibitem{R}{
		{D. Radford},
		\emph{Hopf algebras},
		{Series on Knots and Everything} \textbf{49},
		{World Scientific Publishing Co. Pte. Ltd., Hackensack, NJ},
		{2012},
		{xxii+559}.
	}\\
	
	\bibitem{S}{
		{M. Sweedler},
		\emph{Hopf algebras},
		{Mathematics Lecture Note Series},
		{W. A. Benjamin, Inc., New York},
		{1969},
		{vii+336},
	}\\
	
	\bibitem{vOZ}{
		{F. Van Oystaeyen},
		{Y. Zhang},
		\emph{Finite-dimensional Hopf algebras coacting on coalgebras},
		{Bull. Belg. Math. Soc. Simon Stevin} \textbf{5} {(1998)},
		{no. 1},
		{1--14},
	}
\end{thebibliography}
\end{document}